\def \version {\today}

\documentclass[a4paper,12pt]{article}
\usepackage[utf8]{inputenc}
\usepackage{amsmath, amssymb, amsthm}

\usepackage{multirow}
\usepackage[blocks]{authblk}

\usepackage{lineno}
\newtheorem{theorem}{Theorem}[section]
\newtheorem{definition}[theorem]{Definition}
\newtheorem{corollary}[theorem]{Corollary}
\newtheorem{lemma}[theorem]{Lemma}
\newtheorem{proposition}[theorem]{Proposition}

\newcommand{\Leg}{\operatorname{Leg}}%
\def \bsk {\bigskip}

\def \epsk {\epsilon_{n,k}}

\makeatletter
\def\blfootnote{\gdef\@thefnmark{}\@footnotetext}
\makeatletter

\newcommand{\msc}[2]{\blfootnote{\textup{#1} \textit{Mathematics Subject Classification.} #2.}}
\title{Induced/Incomparable versus Ramsey}

\author[1]{Yair Caro}
\affil[1]{Department of Mathematics, University of Haifa-Oranim, Israel}
\author[2,3]{Zsolt Tuza}
\affil[2]{HUN-REN Alfr\'ed R\'enyi Institute of Mathematics, Budapest, Hungary}
\affil[3]{Department of Computer Science and Systems Engineering, University of Pannonia, Veszpr\'em, Hungary}
\author[4]{Christina Zarb}
\affil[4]{Department of Mathematics, University of Malta, Malta}
\date{\small Latest update on \version }

\begin{document}

\maketitle
\msc{2020}{05C55} 
\begin{abstract}
We consider the following problem: Let $H$ and $F$ be two graphs on $k$ vertices and assume $F \neq H$. We say that $H$ and $F$ are incomparable if neither $F$ nor $H$ contains the other.

Let $H$ be a graph on $k$ vertices and let $G$ be a graph on at least $k$ vertices.  Then $G$ is said to be $H$-exact if any induced subgraph of $G$ on $k$ vertices is  either isomorphic to $H$ or incomparable with $H$.  Exact($H$) is the family of all graphs $G$ which are $H$-exact.

We pose the following problem:  For a graph $H$ on $k$ vertices, determine or estimate  $f(H) = \max \{n: \exists G \in \text{Exact}(H), |V (G)| = n\}$.

Among the many results obtained in this paper the following are representatives concerning trees and matchings.
\begin{enumerate}
\item{For a tree on $k \geq 3$  vertices, $ (k - 1)(\left \lceil \frac{k}{2} \right  \rceil -1 ) \leq   f(T) \leq ( k-1)^2$.}
\item{For $k \geq 4$,  $f(K_{1,k-1}) = (k-1)(k-2)$.}
\item{For $k \geq 5$, $f(P_k) = \frac{(k-1)^2}{2}$ if $k$ is odd and $f(P_k)  = \frac{(k-1)(k-2)}{2}+1$ if $k$ is even.}
\item{$f(nK_2) = 3n$ for $n = 2, 3$ and  $f(nK_2) = 4n - 4$ for $n \geq 4$.}
\end{enumerate}
\end{abstract}
\section{Introduction}
An alternative formulation for the Ramsey numbers $R(H,K_k)$ is that every graph $G$ on at least $R(H,K_k)$ vertices either contains $H$ or has independence number $\alpha(G) \geq k$, that is it contains an induced $E_k=\overline{K_k}$.

However, it is no longer true that $R(H,F)$, where $F \notin \{K_k,E_k\}$, forces that every graph $G$ on at least $R(H,F)$ vertices contains either $H$ or $\overline{F}$, as evident by red $G=E_n$ versus blue $\overline{G}=K_n$ for every $n$.  

We put forward two problems concerning this situation, which we shall present after introducing some necessary terminology.

Let $A$ and $B$ be two graphs.
By standard definition $A$ is a subgraph of $B$ if $V(A)\subseteq V(B)$ and $E(A) \subseteq E(B)$. Moreover, if two vertices in $A$ are adjacent whenever they are adjacent in $B$, then $A$ is said to be an induced subgraph of $B$.
	
If $A$ is isomorphic to a subgraph (not necessarity induced) of $B$, then $B$ is said to contain $A$. 

Let $H$ be a graph on $k$ vertices, and let $G$ be a graph on at least $k$ vertices. Then $G$ is said to be $H$-exact if any induced subgraph of $G$ on $k$ vertices is either isomorphic to $H$ or it neither contains nor is contained in $H$. Then, Exact$(H)$ is the family of all graphs $G$ which are $H$-exact.

We say that  a graph on $k$ vertices is incomparable with $H$ if it neither contains nor is contained in $H$. The set of all such graphs is denoted by Inc$(H)$. Then we let Leg$(H)$ be $H\cup \mbox{Inc}(H)$. Therefore $G$ is $H$-exact if all its induced subgraphs on $k$ vertices are in Leg$(H)$.  On the other hand if a graph on $k$ vertices contains or is contained in $H$, we say it is comparable to $H$ and the set of graphs comparable to $H$ (including $H$) is denoted by Com($H$).  We let Com$^*(H)= \text{Com}(H) \backslash \{H\}$.

We now present the two main definitions:\\
 
For a graph $H$ on $k$ vertices, let 
\begin{enumerate}
\item{$f(H) = \max\{ n : \exists G \in \text{Exact}(H), |V(G)| = n \}$.}
\item{$g(H) =$ \[\min\{ n : \forall G, |V(G)| \geq n, G\text{ contains a } k \text{-vertex induced graph } F \in \text{Com}(H) \}.\]}
\end{enumerate}

Our main concern in this paper  is either to determine the exact value of $f(H)$ and $g(H)$, or to find lower and upper bounds for them.


In Section 2 we present some simple constructions of $H$-exact graphs, which further motivate the study of $f(H)$, as well as some simple facts concerning the notation.

In Section 3   we consider first problem 2 concerning the function $g(H)$ and prove  Theorem 3.1 that  $g(H) = R(H,\overline{H}) = R(\overline{H},H) = g(\overline{H})$ and use this result in Theorem 3.3 to get the lower bound $ f(H) = f(\overline{H})  \geq R(H,\overline{H}) - 1$. 

This result has the immediate consequence that every graph $G$ on at least $R(H,\overline{H})$ vertices contains a $k$-vertex induced subgraph $F$ comparable with $H$, and a $k$-vertex induced subgraph $Q$ comparable with $\overline{H}$, which supplies a strong answer to the phenomenon mentioned in the start of the introduction.  A consequence of this lower bound is that  $f(H)$ can grow exponentially with $|V (H)|$, despite the difficulty to supply constructive examples of more than quadratic growth.

We then consider $H = K_k \setminus Q$, and in particular prove that there exist $0 < c_0 < c_1$  such that $\frac{c_0k^2}{\log k} \leq  f(K_k\setminus K_3) \leq \frac{c_1k^2}{\log k}$.

In Section 4 we determine the exact value of $f(H)$ in terms of the Ramsey number involved  for two families of graphs.

For a given graph $H$ on $k$ vertices ($H \notin \{K_k,E_k\}$), let \[U(H) = \{G : |V (G)| = k, \, e(G) = e(H) + 1, \, G \mbox{ contains } H\}\]  and let \[U(\overline{H}) = \{G : |V (G)| = k, \, e(G) = e(\overline{H}) + 1, 6, \, G \mbox{ contains } \overline{H}\}\]

We prove Theorem 4.2:
\noindent \textit{For $H$ a graph on $k$ vertices, $H \notin \{K_k,E_k\}$, $f(H) = R(U(H), U(\overline{H})) -1 \leq R(K_k,K_k) -1$. Otherwise,  $f(H) =\infty$}.

However, in practice it is not easy to extract lower and upper bounds from this Ramsey type expression, and we give more specific bounds when the structure of $H$ is restricted.  

In Section 5 we consider lower bounds for $f(H)$ in terms of graph parameters (mainly the independence number and the matching number), in the spirit of classical lower bounds in Ramsey theory.   In Theorem 5.1, we prove, among other results,  that if $H$ is connected  on $k$ vertices with independence number $\alpha(H)$, then $f(H) \geq (\alpha(H) - 1)(k - 1)$ and in Theorem 5.3,  we prove that if $H$ is a connected triangle-free graph on $k$ vertices and matching number $\mu(H)$, then $f(H) \geq (k - \mu(H) - 1)(k - 1)$. 

Section 6 is devoted to trees and matchings. Here we prove the results mentioned in the abstract:

\begin{enumerate}\item{For a tree $T$ on $k\geq 3$ vertices, $ (k - 1)(\left \lceil \frac{k}{2} \right  \rceil -1 ) \leq   f(T) \leq ( k-1)^2$.}
\item{For $k \geq 4$, $f(K_{1,k-1}) = (k -1)(k-2)$.}
\item{For $k \geq 5$, $f(P_k)=\frac{(k-1)^2}{2}$ if $k$ is odd and $f(P_k)=\frac{(k-1)(k-2)}{2}+1$ if $k$ is even.}
\item{$ f(nK_2) = 3n$ for $n = 2, 3$ and $f(nK_2) = 4n - 4$ for $n \geq 4$.}
\end{enumerate}

Section 7 contains open  problems concerning $f(H)$.

\section{Basic Observations and Simple Examples}

We first give some simple examples to illustrate the concept of $H$-exact.
\medskip

\noindent \textbf{Construction via vertex transitive graphs}:  Let $F$ be a vertex transitive graph on $k$ vertices and let $G = tF$ (the union of $t \ge 1$ vertex disjoint copies of $F$). Let $F^*=F \setminus \{v\}$, which is independent of $v$ since $F$ is vertex transitive. 

Deleting any vertex from $G = tF$, we get $H= F^* \cup (t-1)F$ and hence $G$ is $H$-exact. Now $\overline{G} = \overline{tF}$ (the complement of $G$) is also a vertex transitive graph and is $Q$-exact where $Q = \overline {G} \setminus \{v\}$.
 \medskip

\noindent \textbf{Construction via regular graphs}:  For every $r$-regular graph $G$ on $k+1$ vertices and any subgraph $H$ on $k$ vertices, $G$ is $H$-exact. This is because for every $k$-vertex induced subgraph $H$, the number of edges is fixed at $e(H)=e(G) - r$. Thus, either $F$, an induced subgraph of $G$ on $k$ vertices, is isomorphic to $H$ or $F$ is incomparable with $H$, hence $G$ is $H$-exact. 
\medskip

So, finding examples of $H$-exact graphs where $|V(G)| = |V(H)| +1$ is easy.  The situation becomes more involved if $|V(G)| \geq |V(H)| +2$, since we know that if in $G$ every $k$ vertices induce a subgraph having the same number of edges, then either $|V(G)| = k + 1$ and $G$ is regular, or $|V(G)| \geq k + 2$ and $ G \in \{K_n,E_n\}$, as shown in \cite{Caro1994}.  Hence the problem of the existence/construction of $H$-exact graphs with $|V(G)| \geq |V(H)| +2$ as well as the determination or bounding $f(H)$ is of interest, and we shall deal with these problems in the following sections.

\bigskip

We now give some useful observations which we shall use in the sequel.

\bigskip

\noindent
\textbf{Fact 2.1.}

\begin{enumerate}

\item
For $H \in \{ K_k , K_k \setminus \{e\} , E_k , K_2 \cup (k-2)K_1 \}$ we have $\Leg(H) = \{H\}$.

\item
$\Leg(\overline{H}) = \{ \overline{F} : F \in \Leg(H) \}$.

\item
Given $H$ on $k$ vertices, every $F$ on $k$ vertices with $e(F) = e(H)$ is either equal to  or incomparable with $H$, hence $F \in \Leg(H)$.

\item
If $G$ is not $H$-exact and $G$ is an induced subgraph of $G^*$, then $G^*$ is not $H$-exact. In particular if there is no $H$-exact graph on $n_o(H)$ vertices then there is no $H$-exact graph on $n_0(H) \geq |H|+1$ vertices.

 \item $G$ is $H$-exact if and only if $\overline{G}$ is $\overline{H}$-exact.
\item
$f(H) = f(\overline{H})$.  This because any $k$-vertex induced subgraph
$F$ in $G$ corresponds to a $k$-vertex induced subgraph $\overline{F}$ in $\overline{G}$ and either $F = H$ and then $\overline{F}=\overline{H}$ or $F$ is incomparable with $H$ which implies that $\overline{F}$ is incomparable with $\overline{H}$. 
\item{We now list the exact values of $f(H)$ and $g(H)$ when $2 \leq |V(H)| \leq 4$.  We use the fact that $f(H)=f(\overline{H})$ (Observation 6 above) and $g(H) = R(H, \overline{H}) = R(\overline{H}, H) = g(\overline{H})$ (Theorem 3.1). The values for small Ramsey numbers used can be found in \cite{Chvtal1972GeneralizedRT}
\begin{center}
\begin{tabular}{ |p{2cm}|p{2cm}|p{2cm}|p{3cm}| p{3cm}| }
 \hline
&$H$& $\overline{H}$ &$f(H)=f(\overline{H})$& $g(H)=g(\overline{H})$\\
\hline 
 $\mathbf{n=2}$& & & &\\
\hline
&$K_2$ &$E_2$ &$\infty$&2\\
 \hline 
 $\mathbf{n=3}$& & & &\\
\hline
&$K_3$ &$E_3$ &$\infty$&3\\
&$P_3$&$K_1 \cup K_2$&4 &3\\
\hline 
 $\mathbf{n=4}$& & & &\\
\hline
&$K_4$ &$E_4$ &$\infty$&4\\
&$K_4 - \{e\}$ &$2K_1 \cup K_2$ &$4$&4\\
&$C_4$ &$2K_2$ &$6$&5\\
&$P_3 \cup K_1$ &$\text{Paw}$&4&5\\
&$K_{1,3}$ &$K_3 \cup K_1$ &6&7\\
&$P_4$&$P_4$&5 &5\\
\hline 
\end{tabular}
	\end{center}}
\end{enumerate}

\section{The function $g(H)$ and lower bounds for $f(H)$}
\subsection{The function $g(H)$}
Recall that for a graph $H$ on $k$ vertices, \[g(H) = \min\{n : \forall G, |V (G)|\geq n,\mbox{ contains a $k$-vertex induced graph $F \in \text{Com}(H)$}\}.\] The next theorem relates $g(H)$ to $R(H, \overline{H})$.

\begin{theorem} \label{gH}
$g(H) = R(H, \overline{H}) = R(\overline{H}, H) = g(\overline{H})$.
\end{theorem}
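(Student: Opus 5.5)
Let $k=|V(H)|$. I will prove $g(H)=R(H,\overline{H})$ directly; the remaining equalities are then formal. Indeed, $R(A,B)=R(B,A)$ by swapping the roles of a graph and its complement. Also, Fact 2.1 (the complement correspondence) gives that $F\in\text{Com}(H)$ if and only if $\overline{F}\in\text{Com}(\overline{H})$, and $G\mapsto\overline{G}$ is a bijection on graphs with a given vertex set. Applying the main identity to $\overline{H}$ then yields $g(\overline{H})=R(\overline{H},H)$.

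The key step is to translate ``$G$ has a $k$-vertex induced subgraph comparable with $H$'' into Ramsey language. I claim that this holds if and only if $G$ contains $H$ as a (not necessarily induced) subgraph, or $\overline{G}$ contains $\overline{H}$ as a subgraph.

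For the forward direction, suppose $S$ is a $k$-set with $F=G[S]$. If $F$ contains $H$, then $G$ contains $H$. If $F$ is contained in $H$ (on the same $k$ vertices), then $\overline{F}$ contains $\overline{H}$, and $\overline{F}=\overline{G}[S]$, so $\overline{G}$ contains $\overline{H}$.

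For the converse, suppose $G$ contains a copy of $H$ on a vertex set $S$ with $|S|=k$. Then $G[S]$ has all the edges of that copy, so $G[S]$ contains $H$. Similarly, if $\overline{G}$ contains $\overline{H}$ on $S$, then $\overline{G}[S]$ contains $\overline{H}$. Taking complements within $S$, this means $G[S]$ is contained in $H$. In both cases $G[S]\in\text{Com}(H)$.

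With this equivalence, colour the edges of $K_n$ red if they lie in $G$ and blue otherwise. The property in the definition of $g(H)$ becomes exactly ``there is a red $H$ or a blue $\overline{H}$''.

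\begin{itemize}
\item \emph{Upper bound.} Every $G$ with $n\ge R(H,\overline{H})$ vertices has the property, so $g(H)\le R(H,\overline{H})$.
\item \emph{Lower bound.} By minimality of the Ramsey number there is a colouring of $K_{R-1}$ with no red $H$ and no blue $\overline{H}$. Its red graph $G$ on $R(H,\overline{H})-1$ vertices fails the property, so $g(H)\ge R(H,\overline{H})$.
\end{itemize}

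One subtlety is that $g$ is defined with ``for all $n'\ge n$''. This causes no trouble, because the property is monotone under taking induced supergraphs: a larger graph contains the smaller one as an induced subgraph. Hence the minimum threshold coincides with the Ramsey number.

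The only real content is the equivalence between ``some $k$-subset induces a subgraph comparable with $H$'' and ``red $H$ or blue $\overline{H}$''. In particular, the step \emph{$F$ contained in $H$ implies $\overline{F}$ contains $\overline{H}$} must be checked carefully. It holds because an embedding of $F$ into $H$ on the same $k$ vertices is a bijection, so non-edges of $H$ map to non-edges of $F$. I expect no serious obstacle beyond stating this cleanly.
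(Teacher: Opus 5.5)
Your proof is correct and follows essentially the same route as the paper. Both translate ``some $k$-set induces a graph comparable with $H$'' into ``a red $H$ or a blue $\overline{H}$'' in the colouring given by $G$ and $\overline{G}$, use the Ramsey property for the upper bound and an extremal colouring on $R(H,\overline{H})-1$ vertices for the lower bound, and finish with the symmetry $R(H,\overline{H})=R(\overline{H},H)$. Your explicit two-way equivalence is just a slightly cleaner packaging of the same argument.
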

\begin{proof}
\textit{Upper bound:} Suppose $|V(G)| \ge R(H, \overline{H})$. 
By the definition of Ramsey numbers, either $G$ contains $H$ or $\overline{G}$ contains $\overline{H}$ (not necessarily induced). If $G$ contains $H$ (even not as an induced subgraph), then the induced graph $F$ on this vertex set contains $H$ and hence $F \in \text{Com}(H)$. Otherwise, $\overline{G}$ contains $\overline{H}$. If $\overline{H}$ is induced in the $k$-vertex set, then in $G$ this same $k$-vertex set induces $H$. 
Else if $\overline{H}$ is not induced, then this vertex set induces, in $\overline{G}$, some $\overline{F}$ containing $\overline{H}$; and the same vertex set induces, in $G$, the subgraph ${F}$ contained in $H$. Hence $g(H) \leq R(H,\overline{H})$.

\textit{Lower bound:} Consider the largest $n$ such that $E(K_n)$ has a red-blue colouring without monochromatic red $H$ or monochromatic blue $\overline{H}$. Let $G$ be the red graph, then $|V(G)| = R(H, \overline{H}) - 1$ and $G$ cannot contain $H$ or a $k$-vertex graph containing $H$. If it contains an induced $k$-vertex $F$ contained in $H$, then in $\overline{G}$, $\overline{F}$ contains $\overline{H}$, a contradiction. Hence $g(H) = R(H, \overline{H})$.

By symmetry of Ramsey numbers, $g(H) = R(H, \overline{H}) = R(\overline{H}, H) = g(\overline{H})$ follows by the same argument.
\end{proof}
\begin{corollary}Every graph $G$ on at least $R(H, \overline{H})$ vertices contains a $k$-vertex induced subgraph $Q \in \text{Com}(H)$ and a $k$-vertex induced subgraph $F\in \text{Com}(\overline{H})$.
\end{corollary}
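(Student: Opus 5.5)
The corollary will follow directly from Theorem~\ref{gH}, applied once to $H$ and once to $\overline{H}$. Let $H$ have $k$ vertices and let $G$ satisfy $|V(G)| \ge R(H,\overline{H})$.

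First I handle $Q$. By Theorem~\ref{gH}, $g(H) = R(H,\overline{H})$. Since $|V(G)| \ge g(H)$, the definition of $g$ gives a $k$-vertex induced subgraph $Q$ of $G$ with $Q \in \text{Com}(H)$. For completeness I would also note the direct reason, taken from the upper-bound argument in the proof of Theorem~\ref{gH}. The Ramsey property puts either a copy of $H$ in $G$ or a copy of $\overline{H}$ in $\overline{G}$. In the first case, the vertex set of that copy induces a graph containing $H$. In the second case, the vertex set induces in $G$ a graph contained in $H$.

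Next I handle $F$ by applying the same theorem to $\overline{H}$, which also has $k$ vertices. Theorem~\ref{gH} gives $g(\overline{H}) = R(\overline{H},H)$, and the symmetry of Ramsey numbers gives $R(\overline{H},H) = R(H,\overline{H})$. Hence $|V(G)| \ge g(\overline{H})$, so $G$ contains a $k$-vertex induced subgraph $F \in \text{Com}(\overline{H})$.

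Both subgraphs therefore exist in $G$; they may lie on different vertex sets, or coincide when the induced graph is $H = \overline{H}$. The only point needing care is that $g(\cdot)$ is defined as a threshold on $|V(G)|$, so a single size bound $R(H,\overline{H})$ serves for both $H$ and $\overline{H}$. This holds precisely because Theorem~\ref{gH} gives $g(H) = g(\overline{H})$. There is no real obstacle beyond this.
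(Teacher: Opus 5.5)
Your proposal is correct and takes the paper's route. The paper gives no separate proof of this corollary and reads it off Theorem~\ref{gH} exactly as you do: $|V(G)|\ge R(H,\overline{H})=g(H)=g(\overline{H})$ yields both induced subgraphs. Your direct Ramsey justification is the upper-bound half of that theorem's proof.
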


\subsection{Lower bounds for $f(H)$}
The following result supplies a lower bound for $f(H)$.
\begin{theorem} \label{minfH}
$f(H)=f(\overline{H}) \geq R(H,\overline{H}) - 1$

\end{theorem}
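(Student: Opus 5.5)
The equality $f(H)=f(\overline{H})$ is Fact 2.1(6), so only the inequality $f(H)\ge R(H,\overline{H})-1$ needs proof. I will reuse the extremal colouring from the lower-bound part of Theorem~\ref{gH}. Set $n=R(H,\overline{H})-1$ and take a red-blue colouring of $E(K_n)$ with no red $H$ and no blue $\overline{H}$. Let $G$ be the red graph, so $\overline{G}$ is the blue graph. I claim $G$ is $H$-exact, which gives $f(H)\ge n$. (If $n<k$ there is nothing to prove, since the statement of $f$ then needs only the trivial convention. In fact $R(H,\overline{H})\ge k$ always, because any colouring of $K_{k-1}$ works. If $n=k-1$, any graph on $k-1$ vertices is vacuously $H$-exact; I will adopt whatever convention the paper uses here, and the bound is trivial in that case.)

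The main step is to check that every $k$-vertex induced subgraph $F$ of $G$ lies in $\Leg(H)$. Suppose instead that $F\in\text{Com}^*(H)$. There are two cases.

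\emph{Case 1: $F$ contains $H$.} Then $G$ contains $H$ as a (not necessarily induced) subgraph, so there is a red copy of $H$, a contradiction.

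\emph{Case 2: $F$ is contained in $H$.} Then $\overline{F}$, taken on the same $k$ vertices, contains $\overline{H}$. This holds because $F\subseteq H$ on the same vertex count means that, after fixing an isomorphic embedding, the non-edges of $H$ are non-edges of $F$. Hence $\overline{G}$ contains a blue $\overline{H}$, again a contradiction.

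Neither case is possible, so every induced $k$-subgraph is isomorphic to $H$ or incomparable with $H$. Note that an $F$ isomorphic to $H$ is itself excluded by Case 1, so in fact every induced $k$-subgraph of $G$ is incomparable with $H$. Either way $G\in\text{Exact}(H)$.

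No step is a real obstacle. The argument is exactly the lower-bound half of Theorem~\ref{gH}, which says that a Ramsey-extremal graph has no induced $k$-vertex subgraph in $\text{Com}(H)$, and in particular none in $\text{Com}^*(H)$. The only care needed is the small-$n$ edge case noted above.
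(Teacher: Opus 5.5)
Your proof is correct and is essentially the paper's argument. The paper phrases it through $h(H)$, the maximum order of a graph all of whose $k$-vertex induced subgraphs are incomparable with $H$, and identifies $h(H)+1$ with $g(H)=R(H,\overline{H})$ from Theorem~\ref{gH}, whose lower-bound half is exactly the Ramsey-extremal colouring you use directly. The one thing to tidy is the degenerate case $R(H,\overline{H})=k$ (namely $H\in\{K_k,K_k\setminus e,E_k,\overline{K_k\setminus e}\}$): $H$-exactness requires at least $k$ vertices, so instead of appealing to a vacuously exact $(k-1)$-vertex graph, just note that $f(H)\ge k$ because $H$ itself is $H$-exact, which is how the paper disposes of these cases.
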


\begin{proof}
For $H \in \{  K_k, K_k\backslash e, E_k, \overline{K_k\backslash e}\}$, this is trivial by direct checking,  since in these cases $R(H,\overline{H}) - 1=|V(H)|-1= k-1$ while $f(H)\geq k$ for any graph $H$ on $k$ vertices.

For any other $H$ consider a stronger version of Exact($H$), with only Inc($H$) =  Leg($H$) $\backslash  \{H\}$, yielding the family Exact$^*$($H$) of graphs $G$ in which every induced subgraph on $k$ vertices is incomparable with  $H$.
We define $h(H)$ as the maximum order $n=|V(G)|$ of graphs $G \in \text{Exact}^*(H)$.
(Since $K_k$, $K_k\backslash e$ and their complements are comparable with every graph on $k$ vertices,  Exact$^*$($H$) is empty for them, with undefined $h(H)$.)


Clearly $f(H) \geq h(H)$, moreover $h(H) = h(\overline{H})$ since $F$ is incomparable with $H$ if and only if $\overline{F}$ is incomparable with $\overline{H}$ (see Observation 2).  We also note that $n' = h(H)+1$ is the smallest number such that every graph on at least $n$ vertices contains a $k$-vertex induced subgraph comparable with $H$.

Hence by the definition of $g(H)$ and Theorem \ref{gH} we get $h(H) = g(H) - 1 = R(H,\overline{H}) -1$, and consequently

 \[f(\overline{H}) =f(H) \geq h(H) \geq R(H,\overline{H})  - 1  = R(\overline{H},H) - 1.\]
\end{proof}

\noindent \textbf{Remark}: The Ramsey number $R(H,\overline{H})$, and hence also $f(H)=f(\overline{H})$, can be exponentially large in terms of $|V (H)|$, despite the difficulty to construct explicit examples, as demonstrated by $H =  K_k  \cup E_k$  and $\overline{H} = K_k + E_k$.  Then $|V(H)| = 2k$ and  \[R(H,\overline{H}) > R(k,k) > k2^{k/2} \left(\frac{\sqrt{2}}{e} +o(1) \right)  = \frac{|V(H)|}{2}2^{|V(H)|/4} \left(\frac{\sqrt{2}}{e} +o(1) \right)\]
by Spencer's lower bound in \cite{SPENCER1975108}.

\subsection{The case $H=K_k \setminus Q$}
The following result is proven in \cite{Caro1994}:

\begin{theorem} \label{regular}
If in $G$ every set of $k$ vertices induces a subgraph with the same  number of edges, then either $|V(G)| = k + 1$ and $G$ is regular, or $|V(G)| \geq k + 2$  and $G \in \{K_n,E_n\}$.
\end{theorem}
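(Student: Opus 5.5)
Throughout, $n=|V(G)|$ and we assume $n\ge k+1$; for $n=k$ the hypothesis is empty and the statement is not meant to cover it. The plan is a double-counting and local-swap argument. Let $G$ have $n$ vertices with every $k$-subset inducing exactly $m$ edges.

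\emph{Step 1 (regular).} For a fixed $(k+1)$-set $S$, counting edges over its $k+1$ subsets of size $k$ gives
\[
(k+1)m=(k-1)e(G[S]),
\]
because each edge of $G[S]$ lies in exactly $k-1$ of them. So $e(G[S])$ is constant. Deleting $v\in S$ leaves $m$ edges, hence $\deg_{G[S]}(v)=e(G[S])-m$ is the same for every $v\in S$. Thus every $(k+1)$-vertex induced subgraph is regular, of a common degree $r$. If $n=k+1$, this says $G$ itself is regular, which is the first alternative.

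\emph{Step 2 (swap argument, $n\ge k+2$).} Take two vertices $u,w$ and a $(k-1)$-set $T$ avoiding both; this needs $n\ge k+1$. Comparing the $k$-sets $T\cup\{u\}$ and $T\cup\{w\}$ gives
\[
|N(u)\cap T|=|N(w)\cap T|.
\]
Now suppose $G\notin\{K_n,E_n\}$. Then there is a vertex $x$ with both a neighbour $u$ and a non-neighbour $w$. Since $n\ge k+2$, the vertices other than $x,u,w$ number $n-3\ge k-1$. Choose $k-2$ of them to form a set $T'$, and put $T=T'\cup\{x\}$. Then $T$ is a $(k-1)$-set avoiding $u$ and $w$. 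The equality above becomes
\[
[ux\in E]+|N(u)\cap T'|=[wx\in E]+|N(w)\cap T'|,
\]
so $|N(u)\cap T'|=|N(w)\cap T'|-1$ for every such $T'$. This needs $k\ge 2$ so that $T'$ can be chosen.

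Next, let $W=V\setminus\{x,u,w\}$. The condition also applies with $T\subseteq W$, a $(k-1)$-set not containing $x$; this is possible because $|W|\ge k-1$. It gives $|N(u)\cap T|=|N(w)\cap T|$.

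\emph{Step 3 (contradiction).} Define $a(y)=[uy\in E]-[wy\in E]$ for $y\in W$. Step 2 gives two facts:
\begin{itemize}
\item every $(k-2)$-subset of $W$ has $a$-sum $-1$;
\item every $(k-1)$-subset of $W$ has $a$-sum $0$.
\end{itemize}
Take any $(k-1)$-subset of $W$ and drop one element $y$. The remaining $(k-2)$-set has sum $-1$, so $a(y)=0-(-1)=1$. This holds for every $y\in W$. But then a $(k-2)$-subset has sum $k-2$, which must equal $-1$, a contradiction since $k\ge 2$. The degenerate cases are $k=1$, and $k-2=0$ where the empty set has sum $0\neq -1$; both give a contradiction directly. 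Hence $G\in\{K_n,E_n\}$.

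The main obstacle is the bookkeeping in Step 2: one must make sure there are enough vertices to choose $T'$ and the $(k-1)$-subsets of $W$, and this is exactly where $n\ge k+2$ is used. Step 1 is routine and only needed for the $n=k+1$ case.
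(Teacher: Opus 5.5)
The paper gives no proof of this statement. It quotes the result from \cite{Caro1994} and uses it as a black box, so there is no in-paper argument to compare yours with.

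For $k\ge 2$ your argument is a correct, self-contained elementary proof. For $n=k+1$, the deletion identity $\deg_G(v)=e(G)-m$ already gives regularity, so the double count over different $(k+1)$-sets in Step~1 is not needed there. For $n\ge k+2$, you apply the swap identity $|N(u)\cap T|=|N(w)\cap T|$ twice: once with $T\ni x$, where $x$ distinguishes $u$ from $w$, and once with $T\subseteq W$. This yields two incompatible conditions on the function $a$: every $(k-2)$-subset of $W$ has $a$-sum $-1$, and every $(k-1)$-subset has $a$-sum $0$. What your route adds over the citation is that it shows exactly where $n\ge k+2$ is used, namely $|W|=n-3\ge k-1$. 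You leave unjustified the existence of a vertex $x$ with both a neighbour and a non-neighbour when $G\notin\{K_n,E_n\}$, but that is immediate.

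Your claim that $k=1$ ``gives a contradiction directly'' is wrong. For $k=1$, every $1$-set induces $0$ edges, so every graph satisfies the hypothesis. For example, $P_3$ has $n=3=k+2$ and is neither complete nor empty, so the theorem is false for $k=1$. Your own Step~2 cannot even form $T'$ in that case, which matches this.

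Make $k\ge 2$ a standing assumption; it is implicit in the paper, where $k=|V(H)|\ge 3$. Treat it the same way you treat $n\ge k+1$. For $n=k$ the hypothesis is trivially satisfied rather than ``empty,'' which is exactly why that case must be excluded. With $k\ge 2$ stated, your proof is complete.
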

\begin{proposition}
For $H = K_k \backslash \{e\}$, $k \geq 3$, we have $f(H) = k + 1$ if $k = 3$, and $f(H)=k$ otherwise.
\end{proposition}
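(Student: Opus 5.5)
By Fact 2.1(1), $\Leg(K_k\setminus\{e\})=\{K_k\setminus\{e\}\}$. So $G$ is $H$-exact exactly when every $k$-subset of $V(G)$ induces a graph isomorphic to $K_k\setminus\{e\}$. In particular every $k$-subset then induces exactly $\binom{k}{2}-1$ edges, and Theorem \ref{regular} applies. The plan is to use this to bound $|V(G)|$, and then check the small cases directly.

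\emph{Upper bound.} Suppose $G$ is $H$-exact with $|V(G)|\ge k+2$. Theorem \ref{regular} forces $G\in\{K_n,E_n\}$. Neither of these has a $k$-vertex induced subgraph with $\binom{k}{2}-1$ edges, since $\binom{k}{2}-1\ge 2>0$ for $k\ge 3$. This is a contradiction, so $f(H)\le k+1$ for every $k\ge 3$. Also $f(H)\ge k$ trivially, because $G=H$ itself is $H$-exact.

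\emph{The case $|V(G)|=k+1$.} Theorem \ref{regular} says $G$ must be regular, say $r$-regular, and every vertex-deleted subgraph has $e(G)-r=\binom{k}{2}-1$ edges. With $n=k+1$ we have $e(G)=nr/2$, so
\[
e(G)-r=\frac{(k-1)r}{2}=\frac{k(k-1)}{2}-1 .
\]
This gives $r=k-\frac{2}{k-1}$, which is an integer only when $k-1\mid 2$, i.e.\ $k\in\{2,3\}$. For $k\ge 4$ this is impossible, so $f(H)=k$.

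For $k=3$ we have $H=P_3$ and $r=2$, and $C_4$ on $4$ vertices works: every 3 vertices of $C_4$ induce $P_3$. So $f(P_3)\ge 4$, and together with the upper bound $f(P_3)=4$. This matches the table in Fact 2.1.

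The main point needing care is the divisibility step. A cleaner way to phrase it is that deleting any vertex must remove exactly $e(G)-\binom{k}{2}+1$ edges, which forces regularity directly, so Theorem \ref{regular} is not even needed there. The computation above is then straightforward.
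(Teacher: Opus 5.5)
Your proof is correct and follows the paper's argument essentially step for step. You use $\Leg(H)=\{H\}$ together with Theorem \ref{regular} to rule out $H$-exact graphs on $n\ge k+2$ vertices; for $n=k+1$ you use the regularity equation $(k-1)r/2=\binom{k}{2}-1$, which the paper writes as $2=(k-1)(k-r)$, and this leaves only $k=3$, $r=2$, realized by $C_4$. Two small points, neither a gap. Your side remark that regularity on $k+1$ vertices follows directly from $e(G-v)=e(G)-\deg(v)$ being constant is right. Your stated reason for excluding $K_n$ is slightly off, but the case is trivial, since every $k$-subset of $K_n$ induces $\binom{k}{2}\neq\binom{k}{2}-1$ edges.
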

\begin{proof}
Of course, $f(k)\geq k$ holds for every $H$.
Consider $H = K_k \backslash \{e\}$, $k \geq 3$. Then Leg$(H) = \{H\}$.  If $G$ on at least $k + 2$  vertices is $H$-exact, then every $k$-subset of $G$ either induces $H$ or  is incomparable to $H$. But since Leg$(H) = \{H\}$, the second alternative is impossible.  Hence, every $k$-subset induces $H$ and must have the same number of edges.  By Theorem \ref{regular}, $G$ should be $K_n$ or $E_n$, both of which are comparable with $H$  and are members of Com$^*(H)$.
Thus for $H = K_k \backslash \{e\}$ we obtain $k \leq f(H) < k+2$.

Now consider $G$  which is $H$-exact of order $k +1$. As above, $G$ must be an $r$-regular graph on $k + 1$ vertices.  If $r = k$, then $G=K_{k+1}$ and every $k$-vertex set is $K_k \in \text{Com}^*(H)$.  

So let us assume that $r \leq k - 1$. Observe that $e(G) = \frac{(k+1)r}{2}$ and $e(G - v) = \frac{(k+1)r}{2}  - r = \frac{(k-1)r}{2}$.  Equating  $\frac{(k-1)r}{2}  = \frac{k(k-1)}{2} -1$ simplifies to   $2 = (k - 1)(k - r)$. This is  possible only if $k = 3$ and $r = 2$, which corresponds to the case of $P_3 = K_3 \backslash \{e\}$ with $G =  C_4$.
\end{proof}

\begin{lemma} \label{indlemma}
$\mbox{  }$
\begin{enumerate}
\item{If $H$ has no isolated vertices then $R(K_k , H)  \geq k + |V(H)|  - 2$.}
\item{For $Q$ a graph without isolated vertices  on $q \geq 3$ vertices and  $k \geq  q +2  \geq  5$,  $R(K_k ,Q) \geq  k +q  - \alpha(Q)$.}
\end{enumerate}
\end{lemma}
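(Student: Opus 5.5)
Both parts are lower bounds on Ramsey numbers. In each case I will exhibit an explicit red/blue colouring of $E(K_N)$, with $N$ one less than the claimed bound, such that the red graph has no $K_k$ and the blue graph has no copy of the second graph (here $H$ or $Q$). The key observation is that a graph with no isolated vertices cannot be embedded using an isolated vertex, or using too many vertices from an independent part.

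For part 1, let $h=|V(H)|$ and $N=k+h-3$. I take the red graph to be $K_{k-1}\cup E_{h-2}$ on $N$ vertices, so the blue graph is its complement, $E_{k-1}+K_{h-2}$ (complete join).
\begin{itemize}
\item The red graph has clique number $\max(k-1,1)<k$, so it contains no $K_k$.
\item In the blue graph, the $k-1$ vertices of $E_{k-1}$ form an independent set, and the only other vertices are the $h-2$ vertices of $K_{h-2}$.
\item A copy of $H$ in blue uses $h$ vertices, so at least two of them lie in the independent set $E_{k-1}$.
\end{itemize}
That alone gives no contradiction, so I will instead choose the colouring $K_{k-1}\cup K_{h-2}$ in red, with blue equal to the complete bipartite graph $K_{k-1,h-2}$ between the two cliques.
\begin{itemize}
\item Red then has clique number $\max(k-1,h-2)$. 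This is less than $k$ provided $h-2<k$, which I will check or handle separately.
\item Blue is bipartite with one side of size $h-2$.
\item $H$ has $h$ vertices and no isolated vertices. Any embedding of $H$ into $K_{k-1,h-2}$ forces a bipartition of $H$ with one part of size at most $h-2$, hence the other part has at least $2$ vertices. This is possible, so this colouring does not work either.
\end{itemize}

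The simplest colouring that does work is red $K_{k-1}$ on $k-1$ vertices, together with $h-2$ further vertices each joined in red to everything. That creates red cliques, so it is wrong too. I will therefore settle on the standard Chvátal–Harary-type construction: red $=(h-1)$ disjoint copies of $K_{k-1}$? That gives the larger bound $(k-1)(h-1)+1$ only when $H$ is connected. For a general $H$ without isolated vertices I will use $N=k+h-3$ vertices: a red $K_{k-1}$ plus $h-2$ vertices, with the $h-2$ vertices forming a red clique and having no red edges to the $K_{k-1}$. The blue graph is then $K_{k-1,h-2}$. A blue $H$ can contain at most one vertex from the $h-2$ side if that side is to act as an independent set? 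No: blue is bipartite, so the independent sets of the blue graph are exactly the two sides. Every vertex of $H$ has a neighbour, so $H$ must use at least one vertex on each side, and the $h$ vertices of $H$ split as $a+b$ with $b\le h-2$. This is allowed, so I must refine the construction.

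I will make the refinement as follows: take red $=K_{k-1}\cup K_{h-2}$ minus nothing, and use the fact that the blue graph is bipartite only if $H$ is bipartite. For a non-bipartite $H$ this colouring already works. For a bipartite $H$, the needed condition is that $H$ has no bipartition with a side of size at most $h-2$, and that is false. So I expect the true construction to be red $=K_{k-1}\cup E_{h-2}$ with all cross edges blue and the $E_{h-2}$ part blue. Blue is then $K_{h-2}+E_{k-1}$ minus the edges inside $E_{k-1}$. A blue $H$ on $h$ vertices needs at least $2$ vertices from $E_{k-1}$, and these are fine as long as they have neighbours in $K_{h-2}$. I will still need an argument here, and this step is the main obstacle: finding the colouring for part 1. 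The fallback I would try first is the two-clique colouring above, with the $h-2$ side being $h-1$ minus a degree argument.

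For part 2, I will use red $=K_{k-1}\cup K_{q-\alpha(Q)-1}$? No: the target is $N=k+q-\alpha(Q)-1$. I take blue to be $E_{k-1}+K_{q-\alpha(Q)}$; it contains no $Q$ because any embedding places at most $\alpha(Q)$ vertices in the independent part, so at least $q-\alpha(Q)$ vertices must go into the clique part. I will then tune the sizes so that this count fails. The red graph is $K_{k-1}\cup E$, which has clique number $k-1$, and the hypothesis $k\ge q+2$ keeps the red clique below $k$.
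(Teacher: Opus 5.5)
There is a genuine gap: neither part ends with a colouring that actually works.

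\textbf{Part 1.} You say yourself that the construction is the open step, and each candidate you try fails. Red $K_{k-1}\cup E_{h-2}$ is both your first and your final ``expected'' colouring. Its blue graph $E_{k-1}+K_{h-2}$ contains, e.g., $K_{1,h-1}$, with the centre in the clique part. Blue $K_{k-1,h-2}$ contains every bipartite $H$, and its red graph even contains $K_k$ when $h-2\ge k$.

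The underlying problem is the split $(k-1)+(h-2)$. It leaves the blue graph with no isolated vertices, so your correct opening observation about isolated vertices is never used. The split that works is $(k-2)+(h-1)$. Take $B$ with $|B|=h-1$ and $A$ with $|A|=k-2$; colour the edges inside $B$ blue and all other edges red.
\begin{itemize}
\item Only the $h-1$ vertices of $B$ have positive blue degree. A blue copy of $H$ would need $h$ such vertices, because $H$ has no isolated vertices.
\item $B$ is independent in red, so a red clique contains at most one vertex of $B$ and hence has at most $k-1$ vertices.
\end{itemize}
This gives $R(K_k,H)\ge k+h-2$, and it is exactly the paper's construction.

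\textbf{Part 2.} Your blue graph $E_{k-1}+K_{q-\alpha(Q)}$ does contain $Q$. Put a maximum independent set of $Q$ into $E_{k-1}$, which is possible since $\alpha(Q)\le q\le k-2$. Put the other $q-\alpha(Q)$ vertices onto the clique, which is joined to everything.

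Tuning the sizes inside the family red $K_a\cup E_b$ / blue $E_a+K_b$ cannot help at $a+b=k+q-\alpha(Q)-1$:
\begin{itemize}
\item If $b\ge q-\alpha(Q)$, the blue graph contains $Q$: place $\max(0,q-b)\le\alpha(Q)$ independent vertices of $Q$ in $E_a$ and the rest in $K_b$.
\item If $b\le q-\alpha(Q)-1$, then $a\ge k$ and there is a red $K_k$.
\end{itemize}
So this family is always one vertex short.

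No new construction is needed. If $Q$ is not complete, then $\alpha(Q)\ge 2$, and part 1 gives $R(K_k,Q)\ge k+q-2\ge k+q-\alpha(Q)$. If $Q=K_q$, then $\alpha(Q)=1$ and you need $R(k,q)\ge k+q-1$. This is classical: the red graph $(q-1)K_{k-1}$ has no red $K_k$ and no blue $K_q$, so $R(k,q)\ge (k-1)(q-1)+1\ge k+q-1$ for $k,q\ge 3$. This reduction to part 1 plus the classical bound for cliques is how the paper proceeds.
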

\begin{proof}
$\mbox{  }$
\begin{enumerate}
\item{Take a set of $k + |V(H)| - 3$ vertices and partition them into two sets: $A$ with $k-2$ vertices and $B$  with $|V(H)|-1$ vertices.  Colour all edges within $B$ blue forming a blue $K_{|V(H)| - 1}$, and all other edges red.  Since $H$ has no isolated vertices, the blue graph contains only $|V(H)| - 1$ vertices and hence no blue copy of $H$. The largest red clique contains all vertices from $A$ and just one vertex from $B$ and therefore has no red $K_k$.  Hence $R(K_k , H) \geq k +|V(H)| - 2$.}
\item{Suppose now $Q$ is not a complete graph  so  $\alpha(Q) \geq 2$.  Then  by part 1 above, $R(K_k ,Q) \geq  k +q - 2 \geq k +q - \alpha(Q)$.  If $Q$ is a complete graph on at least three vertices,  then $R(K_k ,Q)  = R(k,q) > k + q - 2$  always.}
\end{enumerate}
\end{proof}

\begin{theorem}
$\mbox{ }$
\begin{enumerate}
\item{Let $Q$ be graph on $q \ge 3$ vertices with no isolated vertices and with independence number $\alpha(Q)$. Suppose $k \ge q + 2$. Let $H = K_k \setminus Q$, then \[R(K_{k-q+\alpha(Q)}, Q) - 1 \le f(H) \le R(K_k, Q) - 1.\]}
\item{Let $Q \notin \{K_q, E_q\}$ be a graph on $q \ge 3$ vertices with independence number $\alpha(Q)$ and $k = q+1$. Let $H = K_k \setminus Q$, then \[R(K_{k-q+\alpha(Q)}, Q) - 1 \le f(H) \le \max\{R(K_k, Q) - 1, f(Q) + q\}.\]}
\end{enumerate}
\end{theorem}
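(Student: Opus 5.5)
The plan is to work throughout with complements. By Fact 2.1, $G$ is $H$-exact if and only if $\overline{G}$ is $\overline{H}$-exact, and $f(H)=f(\overline{H})$. Here $\overline{H}=Q\cup E_{k-q}$: a copy of $Q$ together with $k-q$ isolated vertices. So I will bound the order of $\overline{H}$-exact graphs $\Gamma$.

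\textbf{Lower bound (both parts).} Let $m=k-q+\alpha(Q)$ and $N=R(K_m,Q)-1$. Take a graph $\Gamma$ on $N$ vertices that contains no copy of $Q$ and has $\alpha(\Gamma)<m$; such a graph exists by the definition of $R(K_m,Q)$. Let $F$ be any $k$-vertex induced subgraph of $\Gamma$.
\begin{itemize}
\item $F$ does not contain $\overline{H}$, because it does not even contain $Q$.
\item $F$ is not contained in $\overline{H}$. Any spanning subgraph of $Q\cup E_{k-q}$ has independence number at least $\alpha(Q)+k-q=m$, while $\alpha(F)\le\alpha(\Gamma)<m$.
\end{itemize}
Hence every $k$-subset of $\Gamma$ induces a graph in $\text{Inc}(\overline{H})$, so $\Gamma\in\text{Exact}(\overline{H})$. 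This gives $f(H)\ge R(K_m,Q)-1$. (In part 2 we need $N\ge k$; this should follow from $Q\notin\{K_q,E_q\}$ together with the argument of Lemma \ref{indlemma}, and I would check that small case directly.)

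\textbf{Upper bound, part 1.} Suppose $\Gamma$ is $\overline{H}$-exact with $n\ge R(K_k,Q)$ vertices.
\begin{enumerate}
\item If $\alpha(\Gamma)\ge k$, then $\Gamma$ contains an induced $E_k$. This is a proper subgraph of $\overline{H}$, since $Q$ has edges, so it lies in $\text{Com}^*(\overline{H})$, a contradiction.
\item Otherwise, by the Ramsey property, $\Gamma$ contains a (not necessarily induced) copy of $Q$ on some set $S$ with $|S|=q$. For any $T\subseteq V\setminus S$ with $|T|=k-q$, the graph $\Gamma[S\cup T]$ contains $\overline{H}$. Exactness therefore forces $\Gamma[S\cup T]\cong\overline{H}$, so every such subgraph has exactly $e(Q)$ edges.
\item It follows that $\Gamma[S]\cong Q$ is induced, there are no edges between $S$ and $V\setminus S$, and $V\setminus S$ is independent.
\item Now take a maximum independent set of $Q$ inside $S$ and add $k-\alpha(Q)$ vertices of $V\setminus S$. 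This is possible because, by Lemma \ref{indlemma}(2) (valid since $k\ge q+2\ge5$), we have $n\ge R(K_k,Q)\ge k+q-\alpha(Q)$. The result is an induced $E_k$, contradicting step 1.
\end{enumerate}
So $n\le R(K_k,Q)-1$.

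\textbf{Upper bound, part 2 ($k=q+1$).} Here Lemma \ref{indlemma}(2) is unavailable, and this is the step I expect to be the main obstacle; the $f(Q)+q$ term accounts for it. Suppose $n>R(K_k,Q)-1$. Steps 1--3 above still apply.

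Let $I$ be the set of isolated vertices of $\Gamma$. We have $|I|\le q$, since otherwise $I$ contains an induced $E_k$.

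I then claim that every $q$-subset $A$ of $\Gamma-I$ induces a graph in $\Leg(Q)$. To see this, choose a vertex $x$ non-adjacent to $A$; the natural candidate is a vertex of $I$, which is nonempty by steps 1--3. Then $\Gamma[A\cup\{x\}]$ is either $\cong Q\cup K_1$ or incomparable with $Q\cup K_1$.
\begin{itemize}
\item If $\Gamma[A]$ contained $Q$ properly, then $\Gamma[A\cup\{x\}]$ would properly contain $Q\cup K_1$, which is excluded.
\item If $\Gamma[A]$ were a proper subgraph of $Q$, then $\Gamma[A\cup\{x\}]$ would be a proper subgraph of $Q\cup K_1$, also excluded.
\end{itemize}
The delicate points are guaranteeing that such an $x$ exists, and handling the degenerate case where $\Gamma-I$ has fewer than $q$ vertices. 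Granting the claim, $|\Gamma-I|\le f(Q)$, and hence $n\le f(Q)+q$. Therefore $f(H)\le\max\{R(K_k,Q)-1,\,f(Q)+q\}$.
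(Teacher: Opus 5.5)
The lower bound and the part 1 upper bound are correct, but the part 2 upper bound has a gap at the choice of the vertex $x$.

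\textbf{What works.} Your lower bound is a direct unpacking of the paper's chain $f(H)\ge R(H,\overline H)-1\ge R(K_{k-q+\alpha(Q)},Q)-1$. Your part 1 upper bound is the paper's argument, read in the complement. Your worry about needing $N\ge k$ does not matter: $H$ is itself $H$-exact, so $f(H)\ge k$ always.

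\textbf{The gap in part 2.} You say Steps 1--3 ``still apply'' and that $I\neq\emptyset$ follows from them. But in Step 3, the independence of $V\setminus S$ came from putting an arbitrary pair of vertices of $V\setminus S$ into a set $T$ with $|T|=k-q\ge 2$. When $k=q+1$, you only have $|T|=1$. Exactness of $\Gamma[S\cup\{t\}]$ then tells you two things: $S$ induces $Q$, and $S$ sends no edges to $V\setminus S$. It says nothing about edges inside $V\setminus S$.

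Part 2 allows $Q$ to have no isolated vertices. If in addition $\Gamma[V\setminus S]$ has no isolated vertices, then $I=\emptyset$. Moreover, for a $q$-set $A$ meeting both $S$ and $V\setminus S$, nothing forces any vertex to be non-adjacent to all of $A$. So your key claim, that $\Gamma-I$ is $Q$-exact, is not established; your own write-up only ``grants'' it.

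\textbf{The fix: use $S$ instead of $I$.} Every $x\in S$ is anticomplete to $V\setminus S$, and that is exactly the part of Step 3 that survives when $k=q+1$. Take any $q$-subset $A\subseteq V\setminus S$ and any $x\in S$. Then $\Gamma[A\cup\{x\}]\cong\Gamma[A]\cup K_1$, and your two bullet points show $\Gamma[A]\in\Leg(Q)$. Hence $\Gamma-S$ is $Q$-exact, so $n-q\le f(Q)$ and $n\le f(Q)+q$.

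The degenerate case is trivial: if $n-q<q$, then $n<2q\le f(Q)+q$.

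This is the paper's argument. The paper phrases it in $G$ itself: there $S$ is complete to $V\setminus S$, so $G[A\cup\{x\}]=G[A]+K_1$, and $V\setminus S$ is $\overline{Q}$-exact with $f(\overline Q)=f(Q)$.
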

 
\begin{proof}
$\mbox{ }$
\begin{enumerate}
\item{\textit{Lower bound}:  Observe first that if $|V(Q)| = q \le k$ then $K_k \setminus Q$ contains $K_{k-q+\alpha(Q)}$.
 
By Theorem \ref{minfH} and this observation, $f(H) \ge R(K_k \setminus Q, \overline{(K_k \setminus Q)}) - 1 \ge R(K_{k-q+\alpha(Q)}, Q \cup (k-q)K_1) - 1 \ge R(K_{k-q+\alpha(Q)}, Q) - 1$.

\noindent \textit{Upper bound}:  Let $G$ be an $H$-exact graph on $n \ge \max\{R(K_k, Q), k+q-\alpha(Q)\}$ vertices, where $H = K_k \setminus Q$, and recall that $G$ is $H$-exact if and only if $\overline{G}$ is $\overline{H}$-exact.  
Consider $\overline{H} = Q \cup (k-q)K_1$. By assumption on $n$, either $G$ contains $K_k$ and we are done, or $\overline{G}$ contains $Q$.  
So, assume $\overline{G}$ contains $Q$.
Observe that if there is a further edge in $\overline{G}$, no matter if it is incident to two vertices of $Q$, or one vertex of $Q$, or vertex-disjoint from $Q$, then since $k \ge q + 2$ we can complete $Q$ to an induced subgraph $F$ on $k$ vertices strictly containing $\overline{H} = Q \cup (k-q)K_1$.
Hence in that case $\overline{G}$ would not be $\overline{H}$-exact and $G$ would not be $H$-exact, contrary to the choice of $G$.  
Consequently a copy of $Q$ in $\overline{G}$ must have the property that it is induced and there are no other edges between $V \setminus Q$ and $Q$, as well as no other edges in $V \setminus Q$.
So, $(V \setminus Q) \cup B$, where $B$ is a maximum independent set in $Q$, is an independent set of order $n - q + \alpha(Q)$\,; and if this is at least $k$, we have an independent set of size $k$ in $\overline{G}$. Hence $K_k \in \text{Com}^*(K_k \setminus Q)$ in $G$, a contradiction.
Thus for $k \ge |Q| + 2$, $f(K_k \setminus Q) \le \max\{R(K_k, Q), k+q-\alpha(Q)\} - 1$.  However  $\max\{R(K_k,Q), k+q-\alpha(Q)\}-1 = R(K_k , Q) - 1$ by Lemma \ref{indlemma}.}       
 
\item{The lower bound is as above.

\noindent \textit{Upper bound}:
We first note that the reason to exclude $Q \in \{ K_q , E_q \}$  is that in both cases $f(Q) = \infty$, hence the upper bound is useless.

  Let $G$ be an $H$-exact graph on $n \ge \max\{R(K_k, Q)-1, f(Q) + q\}+1$ vertices, where $H = K_k \setminus Q$, $k = q+1$. This $H$ is in fact $\overline{Q} + K_1$.
Consider $\overline{H} = Q \cup K_1$. Since $n \geq R(K_k,Q)$, then either $G$ contains $K_k$ and we are done, or $\overline{G}$ contains $Q$.  
So, assume $\overline{G}$ contains $Q$.
Similarly as above, if we can add an edge, which is either incident with two vertices of $Q$, or to one vertex of $Q$, then since $k = q+1$ we can complete $Q$ to an induced subgraph $F$ on $k$ vertices strictly containing $\overline{H}$.
Hence the contradiction is obtained that $\overline{G}$ is not $\overline{H}$-exact and $G$ is not $H$-exact.  
Consequently a copy of $Q$ in $\overline{G}$ must have the property that it is induced and there are no other edges between $V \setminus Q$ and $Q$.
So $\langle V \setminus Q, Q \rangle$ contains no edges in $\overline{G}$. Hence in $G$, $\langle V \setminus Q, Q \rangle$ is $K_{n-q, q}$.
But now in $G$  every $q$-subset in the $n-q$ side must induce either $\overline{Q}$  or a graph incomparable with $\overline{Q}$ (since $H=\overline{Q}+K_1$), otherwise with any  vertex of the $q$-side we have an induced  $k$-vertex subgraph $F\in \text{Com}^*(H)$ (observe that this argument applies since $Q \notin \{K_q, E_q\}$).  Hence, in  $G$, the induced subgraph $R$  on this $n - q$ side, is $Q$-exact and $n - q = |V(R)|  \leq f(Q)$.  This implies $n \leq f(Q) + q$, contradicting the assumption that $n \geq f(Q) + q+1$. }
\end{enumerate}
\end{proof}

\noindent \textbf{Example}: Suppose $Q = K_3$. Then using part  1  and classical bounds \cite{campos2025new,kim1995ramsey} on $R(K_k,K_3)$, we get \[\frac{c_0k^2}{\log k} \le  f(K_k \setminus K_3) \le \frac{c_1k^2}{\log k}.\]  
 
 

\section{General upper bounds for $f(H)$ via Ramsey numbers}

We now develop several upper bounds on $f(H)$ in terms of related Ramsey numbers.

\begin{definition}
For a given graph $H$ on $k$ vertices ($H \notin \{K_k, E_k\}$), let \[U(H) = \{ G: |V(G)|=k, \, e(G)=e(H) +1, \, G \text{ contains } H \}\] and \[U(\overline{H}) = \{ G: |V(G)|=k, \, e(G)=e(\overline{H}) +1, \, G \text{ contains } \overline{H} \}\,.\]
\end{definition}

Clearly $H$ is not in $U(H)$ and $\overline{H}$ is not in $U(\overline{H})$.
But $H$ is contained in every member of $U(H)$ and $\overline{H}$ is contained in every member of $U(\overline{H})$\,; moreover both $U(H)$ and $U(\overline{H})$ are not empty as $H \notin \{K_k,E_k\}$.  

\begin{theorem}
 For $H$ a graph on $k$ vertices, $H \notin \{K_k,E_k\}$, $f(H) = R(U(H) , U(\overline{H}))-1 \leq R(K_k,K_k)-1$.  Otherwise $f(H)=\infty$.
\end{theorem}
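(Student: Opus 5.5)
The plan is to prove a characterization: for $H\notin\{K_k,E_k\}$, a graph $G$ is $H$-exact if and only if, when we colour the edges of $G$ red and those of $\overline{G}$ blue, there is no red copy (not necessarily induced) of any member of $U(H)$ and no blue copy of any member of $U(\overline{H})$. Once this is established, the maximum order of an $H$-exact graph is, by definition of the Ramsey number of the two families, exactly $R(U(H),U(\overline{H}))-1$. So $f(H)=R(U(H),U(\overline{H}))-1$.

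The key step is the following. A $k$-vertex induced subgraph $F=G[S]$ lies in Com$^*(H)$ iff either $F$ strictly contains $H$ or $F$ is strictly contained in $H$. In the first case, fix a copy of $H$ inside $F$ on the vertex set $S$. Since $e(F)>e(H)$, there is an edge of $F$ outside this copy, and adding it gives a subgraph of $F$ isomorphic to a member of $U(H)$. Conversely, suppose $G$ contains a copy of some $U\in U(H)$ on a $k$-set $S$. Then $G[S]\supseteq U\supsetneq H$, so $G[S]$ strictly contains $H$ and is in Com$^*(H)$. The second case reduces to the first by complementation, using Fact 2.1 parts 2 and 5. Indeed, $F$ is strictly contained in $H$ iff $\overline{F}=\overline{G}[S]$ strictly contains $\overline{H}$, which by the same argument happens iff $\overline{G}$ contains a member of $U(\overline{H})$ on some $k$-set. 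Combining the two directions gives the claimed equivalence, since $G$ is $H$-exact iff no $k$-set induces a member of Com$^*(H)$.

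For the inequality, observe that every member of $U(H)$ and of $U(\overline{H})$ has $k$ vertices and so is a subgraph of $K_k$. Any red/blue colouring of $K_n$ with $n\ge R(K_k,K_k)$ has a monochromatic $K_k$, which contains a member of the relevant family; both families are nonempty because $H\notin\{K_k,E_k\}$. Hence $R(U(H),U(\overline{H}))\le R(K_k,K_k)$. Finally, if $H=K_k$ then $K_n$ is $H$-exact for every $n$, since all its $k$-subsets induce $H$. Likewise $E_n$ is $E_k$-exact, so $f(H)=\infty$ in these cases.

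The only delicate point is the ``strictly contains implies contains a member of $U(H)$'' step. One must make sure the extra edge is added to a fixed embedded copy of $H$ within the same $k$-set, so the resulting graph has exactly $e(H)+1$ edges and still contains $H$. One must also use that containment is as a subgraph on the same vertex set, which holds since all graphs involved have $k$ vertices.
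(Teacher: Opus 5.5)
Your proposal is correct and follows essentially the same route as the paper: the two directions of your ``$H$-exact iff no red member of $U(H)$ and no blue member of $U(\overline{H})$'' characterization are exactly the paper's upper and lower bound arguments, including the complementation step for the ``strictly contained in $H$'' case. The bound via $R(K_k,K_k)$ and the $K_k$, $E_k$ cases are handled the same way. You also spell out explicitly a point the paper leaves implicit: strict containment between $k$-vertex graphs forces at least $e(H)+1$ edges on the same vertex set.
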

\begin{proof}
\textit{Upper bound:} Suppose $H \notin \{K_k,E_k\}$.  Consider the Ramsey number $R(U(H) , U(\overline{H}) )$, that is the minimum $n$ such that in every red-blue colouring of $K_n$ either there is a red member of $U(H)$ or a blue member of $U(\overline{H})$ (equivalently either $G$, the graph on $n$ vertices induced by the red edges, contains a member of $U(H)$ or $\overline{G}$ contains a member of $U(\overline{H})$).  If $G$ contains a member of $U(H)$ then it is not exact by the definition of $U(H)$. If $\overline{G}$ contains a member $F$ of $U(\overline{H})$  then the induced subgraph $Q$ (could be $F$) on the vertex set of $F$ is a graph containing $\overline{H}$ but $Q \neq \overline{H}$.  Hence in $G$ the same vertex set induces a subgraph  $\overline{Q}$ contained in $H$  but $\overline{Q} \neq H$. Hence $G$ is not $H$-exact

Observe this Ramsey number is well -defined as both classes are not empty and $K_k$ contains all members of these families (or is its only member), and in particular $f(H) < R(U(H) , U(\overline{H}) ) \leq R(K_k , K_k) $.

\medskip

\noindent
\textit{Lower bound:} Suppose now $n = R(U(H), U(\overline{H}))- 1$.  Then by definition of Ramsey numbers, there is a red-blue colouring of $E(K_n)$ with no red member (not necessarily induced)  of $U(H)$ and no blue member (not necessarily induced) of $U(\overline{H})$.
Equivalently, calling the graph of red edges $G$, neither $G$ contains a member of $U(H)$ nor $\overline{G}$ contains a member of $U(\overline{H})$.

Consider any $k$-vertex induced subgraph $F$ of $G$.
It cannot strictly contain $H$, otherwise a member of $U(H)$ would necessarily occur in $G$. Hence $F$ either has at least $e(H) +1$ edges and is incomparable with $H$, or has exactly $e(H)$ edges and therefore is either an induced copy of $H$ or is incomparable with $H$, or else it contains at most $e(H) - 1$ edges.
We claim that in the latter case $F$ cannot be comparable with $H$;
equivalently, as $e(F)<e(H)$, it cannot be a subgraph of $H$.
%
%
Indeed, otherwise in $\overline{G}$, its complement $\overline{F}$ induced on the same vertex set $V(F)$ would contain at least $e(\overline{H}) +1$ edges and would be comparable with $\overline{H}$.
Then the blue subgraph of $K_n$ would contain a member of $U(\overline{H})$, a contradiction to the choice of the red-blue colouring we started with.

In summary, every $k$-vertex subgraph $F$ in $G$ either induces $H$ or is incomparable with $H$, meaning $G$ is $H$-exact, and similarly  $\overline{G}$ is $\overline{H}$-exact.

Lastly we observe $\text{Exact}(K_k)=\{K_n: n\geq k\}$ and $\text{Exact}(E_k)=\{E_n: n\geq k\}$ and hence in both cases $f(H)= \infty$.
\end{proof}
The next corollary sometimes gives an effective upper bound on $f(H)$. 

\begin{corollary} \label{ub_ramsey}
Let $H \notin \{K_k,E_k\}$ be a graph on $k$  vertices and $F$  a graph on $k$ vertices containing $H$. Then $f(H) < R(F, K_k)$. In particular, for every acyclic graph $H$ on $k$ vertices and at most $k-2$ edges, $f(H) \leq (k-1)^2$.
\end{corollary}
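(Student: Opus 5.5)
The plan is to use the Ramsey characterization of the preceding theorem, $f(H)=R(U(H),U(\overline{H}))-1$. Monotonicity of Ramsey numbers under containment then reduces the corollary to a classical tree-versus-clique bound.

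\textbf{First claim.} Let $F$ contain $H$. If $F=H$, pick any member of $U(H)$ in its place; any $F'\in U(H)$ still contains $H$ and has $k$ vertices. I will show directly that every graph $G$ on $n\ge R(F,K_k)$ vertices fails to be $H$-exact. By definition of $R(F,K_k)$, either $G$ contains $F$ or $\overline{G}$ contains $K_k$.

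In the second case $G$ has an independent $k$-set. This induces $E_k$, which is contained in $H$ and differs from $H$ because $H\ne E_k$. So $E_k\in\text{Com}^*(H)$ and $G$ is not exact.

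In the first case, take the $k$ vertices of a copy of $F$. The induced subgraph $F^*$ on them contains $F$, hence contains $H$. If $F^*\neq H$, then $F^*\in \text{Com}^*(H)$. If $F^*\cong H$, then $F$ is a subgraph of $F^*\cong H$ and $F$ contains $H$ on the same $k$ vertices. Comparing edge counts gives $F\cong H$.

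So when $F\ne H$ (by edge count, $e(F)>e(H)$), $F^*$ has more edges than $H$ and cannot be $H$. Hence $f(H)\le R(F,K_k)-1$, that is, $f(H)<R(F,K_k)$. The degenerate case $F=H$ needs separate care. I would handle it by noting that the statement is only meaningful for $F\neq H$, or else argue via $U(H)$ using $R(U(H),U(\overline H))\le R(F',K_k)$ for $F'\in U(H)$, since $K_k$ contains every member of $U(\overline H)$.

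\textbf{Second claim.} Let $H$ be acyclic with at most $k-2$ edges. Then $H$ is a forest with at least two components. Adding an edge between two components yields a forest $H'$ with one more edge, still containing $H$. Repeating this, I obtain a spanning tree $T$ on $k$ vertices that contains $H$ and satisfies $T\neq H$.

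Chvátal's theorem gives $R(T,K_k)=(k-1)^2+1$ for any tree $T$ on $k$ vertices. The first claim then yields $f(H)<(k-1)^2+1$, that is, $f(H)\le (k-1)^2$. Note also that $H\ne E_k$ is needed, so if $H=E_k$ (zero edges) the hypothesis $H\notin\{K_k,E_k\}$ excludes it.

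\textbf{Main obstacle.} The main subtlety is the first case: ensuring that the induced subgraph on a copy of $F$ is genuinely in $\text{Com}^*(H)$, which I settle by the edge-count argument above. The rest is invoking Chvátal's tree–complete graph Ramsey number, which I take as the standard external result.
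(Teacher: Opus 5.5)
Your proof is correct and is essentially the paper's: the Ramsey dichotomy for $R(F,K_k)$ gives either an induced $E_k\in\text{Com}^*(H)$ or a copy of $F$ whose vertex set induces a graph strictly containing $H$ (your edge count spells out what the paper leaves implicit), and the forest case follows by extending $H$ to a spanning tree $T\neq H$ and applying Chv\'atal's $R(T,K_k)=(k-1)^2+1$. Drop the $U(H)$ fallback for $F=H$: it only yields $f(H)<R(F',K_k)$ for $F'\in U(H)$, not $f(H)<R(H,K_k)$, and the latter is actually false (e.g.\ $f(2K_2)=6=R(2K_2,K_4)$), so the hypothesis must be read as $F\neq H$, exactly as the paper does when it writes $F\in\text{Com}^*(H)$.
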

\begin{proof}

If $G$ is a graph on $n \geq R(F, K_k)$ vertices, then  $G$ either contains induced $E_k \in \text{Com}^*(H)$ or it contains  a (not necessarily induced) copy of $F  \in \text{Com}^*(H)$.  Hence $f(H) < R(F, K_k)$.  If $H$ is acyclic on $k$ vertices and at most $k-2$ edges, then it is a forest  that is contained in some tree $T$ on $k$ vertices hence $f(H) \leq R(T,K_k) - 1 = (k-1)^2$ by Chv\'atal's theorem \cite{Chvtal1977TreecompleteGR}.  
\end{proof}

The next theorem is useful when a graph has a cut-vertex.

\begin{theorem}\label{impthm}
Let $F$ and $H$ be two non-empty graphs each with at least one edge.  Let $w$ be a vertex in $F$ and $v$ a vertex in $H$.  Let $Q$ be the graph obtained by identifying $w$ and $v$ into one vertex and let $Q(e)=Q-e$ for some edge $e$ of $Q$.  Then \[f(Q(e))  <  \min \{ R(H,K_{|Q|} ) +  |H|(R(F,K_{|Q|}) - 1) , R(F,K_{|Q|} ) +  |F|( R(H,K_{|Q|} )- 1)  \}.\]
\end{theorem}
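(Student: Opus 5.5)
By symmetry it suffices to prove
\[f(Q(e)) < R(H,K_{|Q|}) + |H|\bigl(R(F,K_{|Q|})-1\bigr).\]
The other bound follows by exchanging the roles of $F$ and $H$. Put $k=|Q|=|F|+|H|-1$, $r_H=R(H,K_k)$ and $r_F=R(F,K_k)$. Let $G$ be a graph on $n\ge r_H+|H|(r_F-1)$ vertices. We show that $G$ contains a $k$-vertex induced subgraph in $\text{Com}^*(Q(e))$.

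Two facts do the work. First, $E_k$ is comparable with $Q(e)$, since it is contained in it, and $E_k\neq Q(e)$, because $Q(e)$ still has an edge: $F$ and $H$ each have at least one edge and only one is deleted. So we may assume $\alpha(G)\le k-1$, and otherwise we are done. Second, $Q$ strictly contains $Q(e)$. So it suffices to find a (not necessarily induced) copy of $Q$ in $G$. Its $k$ vertices then induce a graph containing $Q$, hence strictly containing $Q(e)$, which lies in $\text{Com}^*(Q(e))$.

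The core step is a greedy disjoint-packing argument. Since $\alpha(G)\le k-1$, every vertex set of size at least $r_H$ spans a copy of $H$, by the definition of $R(H,K_k)$. Repeatedly extract vertex-disjoint copies $H_1,\dots,H_m$ of $H$ from $G$, deleting the $|H|$ vertices of each copy, while at least $r_H$ vertices remain. Because $n\ge r_H+|H|(r_F-1)$, we obtain $m\ge r_F$ disjoint copies. For each copy $H_i$, let $x_i$ be the vertex playing the role of $v$. The set $X=\{x_1,\dots,x_m\}$ has size at least $r_F$. Since $\alpha(G[X])\le k-1$, the graph $G[X]$ contains a copy of $F$. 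Let $x_j$ be the vertex of this copy playing the role of $w$.

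Now glue $F$ and $H_j$ at $x_j$, which is exactly the identification of $w$ and $v$. The other vertices of the $F$-copy are roots $x_i$ with $i\neq j$. They lie in copies $H_i$ disjoint from $H_j$, so none of them lies in $H_j\setminus\{x_j\}$. The union therefore has exactly $|F|+|H|-1=k$ vertices and contains $Q$, which finishes the argument.

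The main obstacle is the counting, since the argument only gives $m\ge r_F$ when $n-|H|(r_F-1)\ge r_H$. This is exactly the hypothesis, so the strict inequality comes out as stated. A second point to check is that $Q(e)\neq E_k$ and $Q\neq Q(e)$, which are guaranteed by the assumption that $F$ and $H$ each have an edge.
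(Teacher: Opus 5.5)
Your proof is correct and follows the paper's argument essentially step by step. You dispose of the case $\alpha(G)\ge |Q|$ via $E_{|Q|}\in\text{Com}^*(Q(e))$, greedily pack $R(F,K_{|Q|})$ vertex-disjoint copies of $H$, find a copy of $F$ on their roots, and glue at the root playing the role of $w$ to obtain a non-induced $Q$; you also spell out why the glued union has exactly $|Q|$ vertices, which the paper leaves implicit.
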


 \begin{proof}

We tackle each upper bound separately.
\begin{enumerate}
\item{$f(Q(e))  <  R(H,K_{|Q|} ) +  |H|(R(F,K_{|Q|}) - 1)$\\

Consider a graph $G$ on $R(H,K_{|Q|} ) +  |H|(R(F,K_{|Q|}) - 1)$ vertices.  Then $G$ contains  $H$ or induced $E_{|Q|}$.  In the latter case, $E_{|Q|}$ is comparable with $Q( e)$,  because $Q(e )$ contains at least one edge, that is $E_{|Q|} \in \text{Com}^*(Q(e))$.
 
So we assume $G$ contains a copy of $H$ (not necessarily induced).  Delete the vertices of this $H$. We still have enough vertices to have another copy of $H$ which we delete, and continue deleting any remaining copy of $H$.  We can delete $R(F , K_{|Q|})$  vertex-disjoint copies of $H$, because if we delete fewer, we still have at least $R(H, K_{|Q|})$ vertices.  

Let these $R(F , K_{|Q|})$ vertex-disjoint copies of $H$ be labelled  $H(j) = (a_{j,1}, a_{j,2},\ldots,a_{j,|H|}): j = 1 \dots R(F , K_{|Q|})$ , where $a­_ {­­j,1}$ is equivalent to vertex $v$ in the $j^{th}$ copy of $H$ 

Consider the set $S$  of $R(F , K_{|Q|})$  vertices formed by $S = \{ a_{j,1} , j = 1,\ldots, R(F , K_{|Q|})   \}$.  As before, either there is induced $E_{|Q|}$ and we are done or there  is a copy of $F$ in $S$.  So, consider  the vertex $w$ in this copy of $F$ --- clearly $w=a_{k,1} \in S$, and recall these each have the role of $v$ so $w = v \in H(k)$.

We take $F$ and the copy of $H(k)$ (to which $a_{k,1}$ belongs) --- this gives a copy of $Q$ (not necessarily induced) strictly containing $Q(e)$. }
\item{$f(Q(e))  <  R(F,K_{|Q|} ) +  |F|( R(H,K_{|Q|} )- 1) $\\

Consider  a graph $G$ on  $R(F,K_{|Q|} ) +  |F|( R(H,K_{|Q|} )- 1)$  vertices. 

 Then, again, either $G$ contains $F$ or induced $E_{|Q|}$.  In the latter case, $E_{|Q|}$ is comparable with $Q( e)$,  because $Q(e )$ contains at least one edge, that is $E_{|Q|} \in \text{Com}^*(Q(e))$.

So we assume there is a copy of $F$ in $G$ (not necessarily induced).  Delete the vertices of this $F$. We still have enough vertices to have another copy of $F$ which we delete, and continue deleting any remaining copy of $F$.  We can delete $R(H , K_{|Q|})$  vertex-disjoint copies of $F$, because if we delete fewer, we still have at least $R(F, K_{|Q|})$ vertices.  

Let these $R(H , K_{|Q|})$ vertex-disjoint copies of $F$ be labelled  $F(j) = (a_{j,1}, a_{j,2},\ldots,a_{j,|F|}): j = 1 \dots R(H , K_{|Q|})$, where $a­_ {­­j,1}$ is equivalent to vertex $w$ in the $j^{th}$ copy of $F$.

Consider the set $S$  of $R(H , K_{|Q|})$  vertices formed by $S = \{ a_{j,1} , j = 1,\ldots, R(H , K_{|Q|})   \}$.  As before, either there is induced $E_{|Q|}$ and we are done or there  is a copy of $H$ in $S$.  So, consider  the vertex $v$ of $H$.  Clearly $v$ must coincide with some $a_{k,1}$ so $v = w$.

We take $H$ and the copy of $F(k)$ (to which $a_{k,1}$ belongs) --- this again gives a copy of $Q$ (not necessarily induced) strictly containing $Q(e)$. }
\end{enumerate}
The combination of the two cases yields
 \[f(Q(e))  <  \min \{ R(H,K_{|Q|} ) +  |H|(R(F,K_{|Q|}) - 1) , R(F,K_{|Q|} ) +  |F|( R(H,K_{|Q|} )- 1)  \}\]
as claimed.
\end{proof}
 
 The next theorem shows that in certain cases a stronger version of Theorem \ref{impthm} can be obtained.

\begin{theorem}

Suppose $H$ is a triangle-free graph on $k$ vertices,  containing a vertex $w$ adjacent to at least two leaves $u$ and $v$.  Then $f(H)  \leq R(K_3,K_k) +R(H,K_k)  - 1 $.
\end{theorem}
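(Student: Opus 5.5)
The plan is to show directly that no graph $G$ on $n \ge R(K_3,K_k)+R(H,K_k)$ vertices is $H$-exact, by exhibiting a $k$-vertex induced subgraph of $G$ lying in $\text{Com}^*(H)$. First I dispose of the trivial alternatives, as in the proofs of Theorem \ref{impthm} and Corollary \ref{ub_ramsey}. If $G$ has an independent set of size $k$, then $E_k\in\text{Com}^*(H)$, since $H$ has the edge $wu$, and we are done. So assume $\alpha(G)<k$. Then every set of at least $R(H,K_k)$ vertices of $G$ spans a (not necessarily induced) copy of $H$, and every set of at least $R(K_3,K_k)$ vertices spans a triangle. Moreover, any copy of $H$ in $G$ that is not induced spans a $k$-vertex induced subgraph strictly containing $H$. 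Hence we may assume every copy of $H$ in $G$ is induced.

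The key structural observation uses the hypothesis on $H$. Let $B=V(H)\setminus\{u,v\}$ and fix a copy of $H$ in $G$. Let $w_0$ be the image of $w$ and $B_0$ the image of $B$. Suppose $w_0$ has two adjacent neighbours $y,z\notin B_0$. Then the induced subgraph on $B_0\cup\{y,z\}$ contains a copy of $H$, with $y,z$ playing the leaves $u,v$. It also contains the edge $yz$, which closes the triangle $w_0yz$. Since $H$ is triangle-free, this edge is not an edge of that copy of $H$, so the induced subgraph has more than $e(H)$ edges. It therefore lies in $\text{Com}^*(H)$, and $G$ is not $H$-exact. So it suffices to find an embedding of $H$ whose $w$-image has an edge in its neighbourhood outside the image of $B$.

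To produce this I would split $V(G)=X\cup Y$ with $|X|=R(K_3,K_k)$ and $|Y|\ge R(H,K_k)$. The natural attempt is to exploit the fact that the $w$-vertex only needs one triangle through it. Take a triangle $xyz$ in $X$ and try to embed $H-\{u,v\}$ into $Y\cup\{x\}$ with $w\mapsto x$; then $y,z$ serve as the leaves. Alternatively, embed $H$ in $Y$ and then look for a triangle through $w_0$ using vertices of $X$. Here the bound $|N(w_0)\cap X|\ge$ something would be needed. If $N(w_0)\setminus B_0$ is independent, it has fewer than $k$ vertices, so $w_0$ has degree less than $2k$. I would try to use this to pass to a subgraph where the vertex chosen as $w$ has large degree into $X$, or to iterate: delete the low-degree vertices, which always keeps at least $R(H,K_k)$ vertices while $\alpha(G)<k$.

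I expect the main obstacle to be exactly this last step: guaranteeing that some copy of $H$ has its $w$-vertex adjacent to both ends of an edge lying outside the rest of the copy. The Ramsey counts give a triangle and a copy of $H$ separately, and they must be glued at $w$. I would first try the degree argument just described. That is, I would show that if every $w$-image has independent outside-neighbourhood, the many copies of $H$ guaranteed in $Y$ together with the triangle in $X$ force an independent set of size $k$, contradicting $\alpha(G)<k$. The budget $R(K_3,K_k)+R(H,K_k)$ is what allows one to reserve a triangle-rich set $X$ disjoint from a set $Y$ that still contains a copy of $H$.
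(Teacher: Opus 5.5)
Your preliminary reductions are correct: assume $\alpha(G)<k$ and that every copy of $H$ is induced. Your local observation is also correct. If $w_0$ has adjacent neighbours $y,z\notin B_0$, moving the leaves $u,v$ onto $y,z$ gives a $k$-set spanning $H$ plus the extra edge $yz$, hence a member of $\text{Com}^*(H)$. This leaf swap is exactly the mechanism in the paper's proof.

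The gap is that you never produce a copy of $H$ whose $w$-image lies in a triangle, and the routes you sketch do not work. Splitting $V(G)=X\cup Y$ separates the triangle from the copy. Applying $R(H,K_k)$ to $Y$ gives a copy of $H$ somewhere in $Y$, not one rooted at a prescribed triangle vertex $x\in X$. A copy found in $Y$ may also have $w_0$ with no neighbour in $X$ at all, so no lower bound on $|N(w_0)\cap X|$ is available.

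The degree idea is valid in principle: if $\deg(w_0)\ge 2k-3$, then $N(w_0)\setminus B_0$ has at least $k$ vertices and so contains an edge. But your claim that deleting low-degree vertices still leaves $R(H,K_k)$ vertices fails within this budget. With $\alpha(G)<k$, there can be $(k-2)(2k-3)$ vertices of degree at most $2k-4$: take $k-2$ disjoint copies of $K_{2k-3}$ plus a disjoint clique. This already exceeds $R(K_3,K_k)$ at $k=4$, since $10>9$. This route can only give $f(H)\le R(H,K_k)+(k-1)(2k-3)-1$, which is weaker than the statement for every $k\ge 4$.

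The missing idea is to make the whole region in which you search for $H$ consist of vertices lying in triangles, rather than reserving a separate triangle-rich set. The paper greedily removes $t=\lfloor R(H,K_k)/3\rfloor+1$ vertex-disjoint triangles. This is possible because after removing fewer, at least $R(K_3,K_k)$ vertices remain. Their union has at least $R(H,K_k)$ vertices, so it contains a copy of $H$, and its $w$-image lies in one of these triangles $\{w_0,x,y\}$. (Equivalently, the vertices lying in no triangle induce a triangle-free graph with independence number less than $k$. So there are fewer than $R(K_3,K_k)$ of them, and you can find $H$ among the remaining vertices.)

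Since this triangle may meet the copy, your observation must also be extended:
\begin{itemize}
\item If $x$ and $y$ both lie in the copy, its vertex set already contains a triangle, so the copy is not induced.
\item If exactly one, say $x$, lies in the copy, move a leaf image other than $x$ onto $y$.
\item If neither does, move both leaves as you described.
\end{itemize}
In each case the resulting $k$-set spans $H$ together with a triangle. Since $H$ is triangle-free, it induces a member of $\text{Com}^*(H)$.
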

 
\begin{proof}
Consider a red-blue colouring of $E(K_n)$  where $n \geq  R(K_3 , K_k)  +  R(H, K_k)$.  Using the $R(K_3,K_k)$ part there is either a red $K_3$ or a blue $K_k$. If there is a blue $K_k$ the graph $G$ induced by the red edges contains an induced  $E_k \in \text{Com}^*(H)$.  Hence,  we assume there is a red $K_3$.  We can delete $ \left \lfloor \frac{R(H,K_k)}{3} \right \rfloor  +1$ disjoint copies of $K_3$'s, since deleting at most $ \left \lfloor \frac{R(H,K_k)}{3} \right \rfloor$ such copies, only at most $R(H,K_k)$ vertices would be deleted, still leaving at least $R(K_3,K_k)$ vertices and we can delete one more red $K_3$.

These $t = \left \lfloor \frac{R(H,K_k)}{3} \right \rfloor+1$ vertex disjoint red triangles  span at least  $R(H,K_k)$  vertices.


Now consider the graph spanned by the vertices of these $t$ triangles.  As there are at least $R(H,K_k)$ vertices, by Ramsey we get either a red $H$ or a blue $K_k$ (already excluded).  So,  consider this  red copy of $H$.  If $H$ contains three vertices in one of the red triangles, then it is a red copy of $H$ containing a red triangle and we are done  since $H$ is triangle-free; so we assume there is no such red triangle in $H$.

Recall that $H$ contains a vertex $w$ (adjacent to two  leaves $u$ and $v$). Suppose $w$ is in a red triangle $A = \{ w ,x ,y \}$. If none of $x$ and $y$ are occupied by vertices of  $H$, then $u$ and $v$ are other vertices, and we can replace them by $x$ and $y$ --- this causes no change to the connectivity  (or components)  of the red $H$, due to the role of $\{ w ,v ,u \}$, and we have a red $H$  containing a red triangle.

If, without loss of generality, $x$ is occupied by $v$,  then  $y$ is not occupied by any other vertex of $H$ otherwise there is a non-induced red copy of $H$ containing a red triangle. 
But then $u$ can be moved to $y$ to again give a red $K_3$ in a red  $H$.    

If, without loss of generality, $x$ is occupied by a vertex $z$ of $H$  which is not $u$ or $v$, then $y$ is not occupied by any other vertex of $H$, otherwise there is a red triangle in  a red $H$.
But then say $u$ is in another place and we move $u$ to $y$ to get a red $H$ with a red triangle.

This covers all cases and hence $f(H) \leq R(K_3,K_k) + R(H,K_k)-1$.  
\end{proof}

\noindent \textbf{Remark}:   For a tree $T$ on $k$ vertices having a vertex $w$ adjacent to at least two leaves, $ f(T) \leq R(K_3,K_k) + (k-1)^2$ holds,  because we know that $R(T,K_k) = ( k-1)^2 +1$ by Chv\'atal's Theorem \cite{Chvtal1977TreecompleteGR}.  However in Section 6 we will show via an ad-hoc argument tailored for trees  that for every tree $T$ on $k \geq 3$ vertices, the upper bound $ f(T) \leq ( k-1)^2$ is valid.

\section{Lower bounds for $f(H)$ via graph parameters}
\label{s:LB}
This section concerns useful lower bounds for $f(H)$ in terms of some important graph parameters.

\begin{theorem}\label{treethree}
Let $H$ be a graph on $k\geq 3$ vertices with independence number $\alpha(H)$.  Then 
\begin{enumerate}
\item{$f(H) \geq R(H,K_{\alpha(H)}) - 1$.  }
\item{If $H$ is connected, then  $f(H)  \geq (\alpha(H) - 1)(k-1)$.}
\item{If $H$ is a tree $T$ on $k$ vertices, then $f(H) \geq (\alpha(T)-1)(k-1) \geq
 (k-1)(\left \lceil \frac{k}{2} \right \rceil -1)$}.
\end{enumerate}
\end{theorem}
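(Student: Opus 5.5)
The plan for part 1 is to take an extremal Ramsey colouring and show that its red graph is $H$-exact; in fact every $k$-vertex induced subgraph will be incomparable with $H$. Let $a=\alpha(H)$ and $n=R(H,K_a)-1$. If $n<k$ there is nothing to prove, since $f(H)\ge k$ always, so assume $n\ge k$. By the definition of the Ramsey number there is a red-blue colouring of $E(K_n)$ with no red $H$ and no blue $K_a$. Let $G$ be the red graph. Then $G$ contains no copy of $H$, not even a non-induced one, and $\alpha(G)\le a-1$.

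Now let $F$ be any $k$-vertex induced subgraph of $G$. Since $G$ does not contain $H$, neither does $F$. In particular $F\neq H$ and $F$ does not strictly contain $H$.

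Suppose instead that $F$ is contained in $H$. Both graphs have $k$ vertices, so $F$ is a spanning subgraph of a copy of $H$. Deleting edges can only enlarge independent sets, so $\alpha(F)\ge\alpha(H)=a$. But $F$ is induced in $G$, so $\alpha(F)\le\alpha(G)\le a-1$, a contradiction. Hence every such $F$ lies in Inc$(H)\subseteq$ Leg$(H)$. Thus $G$ is $H$-exact and $f(H)\ge n=R(H,K_{\alpha(H)})-1$.

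For part 2 it remains to bound $R(H,K_a)$ from below when $H$ is connected. I would use the classical Chv\'atal--Harary construction: take $G=(a-1)K_{k-1}$, the disjoint union of $a-1$ cliques of order $k-1$. Every component of $G$ has $k-1$ vertices, so $G$ contains no connected graph on $k$ vertices, and in particular no $H$. Moreover $\alpha(G)=a-1$, so the complement contains no $K_a$. This gives $R(H,K_a)\ge (a-1)(k-1)+1$. Alternatively, one can check $G$ directly with the argument of part 1, which needs only that $G$ has no $H$ and $\alpha(G)<\alpha(H)$. Either way, part 1 yields $f(H)\ge(\alpha(H)-1)(k-1)$.

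For part 3, a tree $T$ is connected, so part 2 applies. A tree is bipartite, so the larger colour class is independent and $\alpha(T)\ge\lceil k/2\rceil$, which gives the stated chain of inequalities. The only real step is the monotonicity observation in part 1: a subgraph of $H$ on the same $k$ vertices has independence number at least $\alpha(H)$. Once that is noted, there is no serious obstacle, because the edge-count bookkeeping used in the earlier proofs is avoided entirely.
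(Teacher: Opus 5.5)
Your proof is correct, but for part 1 it takes a different route from the paper. The paper gets part 1 indirectly. Since $\overline{H}$ contains $K_{\alpha(H)}$, it notes $R(H,\overline{H})\ge R(H,K_{\alpha(H)})$ and then invokes Theorem~\ref{minfH}, $f(H)\ge R(H,\overline{H})-1$. That theorem in turn rests on Theorem~\ref{gH}, the auxiliary function $h(H)$, and a separate check of the four exceptional graphs $K_k$, $K_k\setminus e$ and their complements.

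You instead work directly with an extremal colouring for $R(H,K_{\alpha(H)})$. Your key observation is that a $k$-vertex subgraph of $H$ is spanning, so its independence number is at least $\alpha(H)$. This is exactly the argument the paper uses only in part 2, for the specific graph $(\alpha(H)-1)K_{k-1}$.

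Your version has three advantages:
\begin{enumerate}
\item it is self-contained and avoids the $g$/$h$ machinery altogether;
\item it shows explicitly that every $k$-vertex induced subgraph of the extremal graph is incomparable with $H$, so the graph lies in Exact$^*(H)$;
\item it makes part 2 an immediate corollary of part 1 via the Chv\'atal--Harary bound $R(H,K_a)\ge (a-1)(k-1)+1$, rather than a separate construction.
\end{enumerate}
Your ``alternatively'' branch for part 2 coincides with the paper's proof. The paper's route, in exchange, presents part 1 as a weakening of the stronger general bound $f(H)\ge R(H,\overline{H})-1$. Part 3 matches the paper.
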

\begin{proof}
$\mbox{ }$
\begin{enumerate}
\item{
We first observe that  $R(H,\overline{H}) \geq R(H,K_{\alpha}(H))$.  This is because in any red-blue colouring of $E(K_n)$  that forces  either a red $H$ or a blue $\overline{H}$, either there is a red $H$ or a blue $K_{\alpha(H)}$ since $\overline{H}$ must contain $K_{\alpha(H)}$.  Hence by Theorem 3.3, $f(H) \geq R(H, K_{\alpha(H)}) -1$.  }
\item{Consider $G = (\alpha(H) - 1)K_{k-1}$. Clearly $G$ contains no copy of $H$ and hence no  induced $k$-vertex graph containing $H$ since $H$ is connected.  Observe also that $\alpha(G) = \alpha(H)-1$ and hence for any induced subgraph $G^*$ of $G$, $\alpha(G^*) \leq \alpha(G) < \alpha(H)$.  But if there is a $k$-vertex induced subgraph $F$ contained in $H$, then $\alpha(F)  \geq \alpha(H)$ which is impossible.

Hence $G$ is $H$-exact and $f(H)  \geq (\alpha(H) - 1)(k-1)$.}
\item{If $H$ is a tree $T$ on  $k$ vertices with independence number $\alpha(T)$, then by part 2, $f(H) \geq (k - 1)(\alpha(T) - 1)$. We observe that in every tree on $k$ vertices,   $\alpha(T)  \geq  \left \lceil \frac{k}{2} \right \rceil$ and the second inequality follows.}
\end{enumerate}
\end{proof}

As a matter of fact, the lower bound in part 3 of this theorem is valid also for all bipartite connected graphs of order $k$.

\begin{corollary} \label{bipart}
Let $H$  be a connected graph on $k$ vertices with matching number $\mu(H)$. Then $ f(H)  \geq ( k - 2\mu(H) - 1) (k-1)$; and if $H$ is a connected bipartite graph then $f(H)  \geq (k - \mu(H) - 1)(k-1)$.
\end{corollary}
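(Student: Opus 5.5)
The plan is to derive both inequalities from part 2 of Theorem \ref{treethree}, which gives $f(H) \geq (\alpha(H)-1)(k-1)$ for every connected $H$ on $k$ vertices. The corollary then reduces to lower bounds on $\alpha(H)$ in terms of the matching number $\mu(H)$. We may assume $k \geq 3$, as in Theorem \ref{treethree}. For $k=2$, $H=K_2$ and $f(H)=\infty$, so the claim is trivial.

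For the general statement, take a maximum matching $M$ of $H$. It covers $2\mu(H)$ vertices. By maximality of $M$, the remaining $k-2\mu(H)$ vertices form an independent set: any edge among them could be added to $M$. Hence $\alpha(H) \geq k-2\mu(H)$. Substituting into Theorem \ref{treethree}(2) gives
\[ f(H) \geq (\alpha(H)-1)(k-1) \geq (k-2\mu(H)-1)(k-1). \]
When $k-2\mu(H)-1 \leq 0$ the right-hand side is nonpositive, and the inequality holds anyway since $f(H) \geq k$.

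For the bipartite statement, I would invoke K\H{o}nig's theorem. In a bipartite graph the maximum matching size equals the minimum vertex cover size, $\mu(H)=\tau(H)$. The complement of a minimum vertex cover is a maximum independent set, so $\alpha(H) = k-\tau(H) = k-\mu(H)$ (Gallai's identity $\alpha+\tau=k$). Plugging this into Theorem \ref{treethree}(2), which applies because $H$ is connected, yields $f(H) \geq (k-\mu(H)-1)(k-1)$.

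Neither step is a real obstacle once Theorem \ref{treethree}(2) is in hand. The only points needing care are the maximality argument for the independent set outside $M$ in the general case, and citing K\H{o}nig's theorem correctly in the bipartite case. As a check, for trees $\mu(T) \leq \lfloor k/2\rfloor$, so the bipartite bound recovers part 3 of Theorem \ref{treethree}.
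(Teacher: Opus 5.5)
Your proposal is correct and follows the paper's proof essentially verbatim. Both arguments reduce to part 2 of Theorem~\ref{treethree}: in general via $\alpha(H)\ge k-2\mu(H)$, and in the bipartite case via K\H{o}nig's theorem $\tau(H)=\mu(H)$ together with $\alpha=k-\tau$. Your observation that the unmatched vertices of a maximum matching are independent is the same fact as the paper's $\tau(H)\le 2\mu(H)$, and your handling of the degenerate cases ($k=2$, or a nonpositive right-hand side, covered by $f(H)\ge k$) is a harmless tidying-up that the paper leaves implicit.
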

\begin{proof}
It is well known that $\alpha(H) = k - \tau(H)$ where $\tau$ is  the size of the minimum vertex cover. Clearly $\tau(H) \leq 2 \mu(H)$, where $\mu(H)$ is the matching number of $H$ and hence $\alpha(H) \geq k - 2\mu(H)$.  Therefore, by part 2 of Theorem~\ref{treethree}, $f(H) \geq ( k - 2\mu(H) - 1) (k-1)$.

A well known Theorem by K\"onig in \cite{konig1931grafok} states that $\tau(H)=\mu(H)$ holds for bipartite graphs, hence $f(H)  \geq (k - \mu(H) - 1)(k-1)$.
\end{proof}

We now prove the stronger result that, under the connectivity requirement, the lower bound in Corollary \ref{bipart} can be extended from bipartite graphs to all triangle-free graphs.

\begin{theorem} \label{trianglefree}
Let $H$ be a connected triangle-free graph on $k$ vertices and maximum matching size $\mu(H)$. Then $f(H) \ge (k - \mu(H) - 1)(k-1)$.
\end{theorem}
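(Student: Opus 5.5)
The plan is to reuse the construction from part 2 of Theorem~\ref{treethree}, with the parameter $t=k-\mu(H)-1$ in place of $\alpha(H)-1$. So I take
\[
G = t K_{k-1}, \qquad t = k-\mu(H)-1,
\]
and show that $G$ is $H$-exact; then $f(H)\ge t(k-1)$. The independence-number argument cannot simply be repeated, because for non-bipartite triangle-free graphs (odd cycles, say) $\alpha(H)$ can be smaller than $k-\mu(H)$. Instead I will use triangle-freeness to control which induced subgraphs of $G$ can sit inside $H$.

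First I dispose of degenerate cases. If $t\le 1$, the claimed bound is at most $k-1$, which is trivial since $f(H)\ge k$ always. So assume $t\ge 2$. Then $|V(G)|=t(k-1)\ge k$, and $G$ has $k$-vertex induced subgraphs.

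Next, let $F$ be any induced $k$-vertex subgraph of $G$. It is a disjoint union of cliques $K_{a_1}\cup\dots\cup K_{a_s}$ with $\sum a_i=k$, each $a_i\le k-1$ and $s\le t$. Since each $a_i\le k-1$, we have $s\ge 2$, so $F$ is disconnected.

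\emph{$F$ does not contain $H$.} A spanning copy of the connected graph $H$ inside $F$ would make $F$ connected, which it is not. This also gives $F\ne H$.

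\emph{$F$ is not contained in $H$.} Suppose $F\subseteq H$. Since $H$ is triangle-free, so is $F$, and hence every $a_i\le 2$. Thus $F$ is a matching together with isolated vertices. Its edges form a matching in $H$, so $e(F)\le\mu(H)$. The number of components is $s=k-e(F)\ge k-\mu(H)>t$, which contradicts $s\le t$.

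Hence every $k$-vertex induced subgraph of $G$ is incomparable with $H$, so $G$ is $H$-exact and $f(H)\ge (k-\mu(H)-1)(k-1)$.

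The only step needing real care is the second one: seeing that triangle-freeness reduces a subgraph of $H$ that is a union of cliques to a matching, so that the component count is governed by $\mu(H)$ rather than by $\alpha(H)$. The rest is bookkeeping.
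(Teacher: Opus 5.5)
Your proof is correct and follows the paper's approach: the same graph $(k-\mu(H)-1)K_{k-1}$, connectivity of $H$ to rule out $F\supseteq H$, and triangle-freeness together with a component count to rule out $F\subseteq H$. The paper splits into the cases ``some component contributes at least three vertices'' versus ``all contribute at most two''; you instead argue directly from $F\subseteq H$. You also handle the degenerate case $t\le 1$, where the construction has fewer than $k$ vertices, which the paper leaves implicit.
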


\begin{proof}
Consider $G = (k - \mu(H) - 1)K_{k-1}$ (the disjoint union of $k - \mu(H) - 1$ copies of $K_{k-1}$).
Clearly, there is no copy of $H$ in $G$, as $H$ is connected.

If we take at least three vertices of the same component, we have a cycle (or a clique of size at least three), and complementing  these vertices to any $k$-vertex induced subgraph $F$, this graph contains $K_3$ but does not contain $H$ and hence $F$ is incomparabable with $H$.

If we take at most two vertices from each component, then
picking any $k$ vertices we have at least $\mu(H)+1$ induced edges from distinct components, because the number of components is only $k - \mu(H) - 1$.
In this case, however, the $k$-vertex induced subgraph $F$ has matching number $\mu(F)\geq \mu(H)+1$ while the connected $H$ is not a subgraph of the disconnected $F$, hence they are incomparable.
%
%

By the above two cases, any $k$-vertex subgraph is neither $H$ nor comparable with $H$.
Thus, $G$ is $H$-exact, forcing $f(H) \ge (k - \mu(H) - 1)(k-1)$.
\end{proof}



\section{Quadratic bounds for $f(T)$ and the exact determination of $f(H)$ for stars and matchings}

The next two theorems show that $f(T) \leq (k-1)^2$ holds for every tree $T$ on $k \geq 3$  vertices.  We first consider the case where $T$ is not a star.  
\begin{theorem}

If $T$ is a tree of order $k\geq 4$, which is not a star then $f(T) \leq (k-1)^2$.

\end{theorem}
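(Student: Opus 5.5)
We need to show that any $T$-exact graph $G$ has at most $(k-1)^2$ vertices when $T$ is a non-star tree on $k\ge 4$ vertices. We argue by contradiction. Let $G$ be $T$-exact with $n \ge (k-1)^2+1 = R(T,K_k)$ vertices, using Chv\'atal's theorem \cite{Chvtal1977TreecompleteGR}.

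By Ramsey, either $G$ contains an independent set of size $k$, or $G$ contains a (not necessarily induced) copy of $T$. An induced $E_k$ is contained in $T$ and is not $T$, so it lies in $\text{Com}^*(T)$, which is impossible. So $G$ contains a copy of $T$ on some vertex set $X$. Since $G$ is $T$-exact and $\langle X\rangle$ contains $T$, $\langle X\rangle$ must be exactly $T$. Hence \emph{every} copy of $T$ in $G$ is induced. The goal is then to exploit the $n-k \ge (k-1)^2+1-k$ remaining vertices to produce a $k$-set inducing a graph strictly containing $T$.

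\textbf{Step 1: a vertex adjacent to two vertices of the copy.} Take a leaf $\ell$ of $T$ with neighbour $p$. Consider any outside vertex $z\notin X$ adjacent to $p$. Then $(X\setminus\{\ell\})\cup\{z\}$ spans a copy of $T$, so by the above it must induce exactly $T$. Therefore $z$ has no neighbours in $X\setminus\{\ell,p\}$, and repeating with every leaf constrains the neighbourhoods heavily.

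More generally, we would show the following: if some vertex $z$ outside $X$ is adjacent to a vertex $x\in X$, then swapping a suitable leaf (one not equal to $x$) for $z$ gives a copy of $T$ using an extra edge, unless $z$ is adjacent only to $x$ and $x$ is the neighbour of the swapped leaf. Because $T$ is not a star, it has two leaves with distinct neighbours $p\ne p'$. This lets us show that any outside vertex adjacent to $X$ yields either a non-induced copy of $T$, or a configuration where $z$ and $\ell$ are twins with respect to $X$.

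\textbf{Step 2: the main obstacle.} If we cannot immediately find a non-induced copy, the structure is very rigid: $G$ behaves like a disjoint union of components, each with no $k$-vertex subtree having extra edges, and each copy of $T$ is induced. I would then partition $V(G)$ by repeatedly removing copies of $T$ together with their attached ``twin'' leaves, and bound the independence number. Choosing one vertex from each component and the leaves gives an independent set. The intended count is that each component either has fewer than $k$ vertices or contributes at least $\alpha$-many independent vertices per $k-1$ vertices, so that $n\ge (k-1)^2+1$ forces an independent $k$-set, a contradiction.

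Concretely, I would try to prove that every component $C$ of $G$ with a copy of $T$ satisfies $\alpha(C)\ge |C|/(k-1)$. Components without a copy of $T$ (hence $T$-free) also satisfy $\alpha(C)\ge |C|/(k-1)$, by the standard greedy argument behind Chv\'atal's bound: a $T$-free graph has a vertex of degree at most $k-2$. Summing over components gives $\alpha(G)\ge n/(k-1)>k-1$, a contradiction. Proving the bound $\alpha(C)\ge |C|/(k-1)$ for components containing a copy of $T$, using only that all copies are induced and the leaf-swapping rigidity of Step 1 (with the non-star hypothesis ruling out the star-like twin explosion), is the step I expect to be hardest.
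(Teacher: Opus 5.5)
Your proposal has a genuine gap. The step that carries the whole weight of the theorem is the bound $\alpha(C)\ge |C|/(k-1)$ for components $C$ containing a copy of $T$. You leave it unproved, as you acknowledge, and the preparatory steps do not lead to it.

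Chv\'atal's theorem only gives you one copy of $T$, which must be induced; that alone is no contradiction. Moreover, the general form of Step 1 is false. If $z\notin X$ is adjacent to a vertex $x\in X$ that is not the neighbour of any leaf, then $(X\setminus\{\ell\})\cup\{z\}$ spans $T-\ell+xz$. This is a tree on $k$ vertices that is in general not isomorphic to $T$. For example, take $T=P_5=v_1\dots v_5$, $x=v_3$, and $z$ adjacent only to $v_3$: the resulting spider is incomparable with $P_5$, so exactness imposes no constraint on $z$. Only the special case where $z$ is adjacent to the neighbour $p$ of a leaf is valid. Consequently the ``twin'' structure and the component picture of Step 2 are not established, and the non-star hypothesis enters only through this unsupported step.

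The missing idea is to work with a dense core of $G$ instead of with copies of $T$.

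Since $\alpha(G)\le k-1$ and $n>(k-1)^2$, $G$ cannot be $(k-2)$-degenerate. Otherwise $\chi(G)\le k-1$ and $\alpha(G)\ge n/(k-1)>k-1$. So $G$ has an induced subgraph $F$ with $\delta(F)\ge k-1$. There are two cases.

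\begin{itemize}
\item If $F$ contains a $C_3$ or a $C_4$, use that a non-star tree has diameter at least $3$ and hence contains a path $v_1v_2v_3v_4$. Map this path onto the $C_4$, or onto a triangle plus a pendant edge. Then embed the remaining vertices of $T$ greedily, in an order where each new vertex has exactly one earlier neighbour. This always succeeds, because that neighbour's image has degree at least $k-1$ in $F$ but at most $k-2$ already used neighbours. The resulting $k$-set induces a graph properly containing $T$, which is forbidden.
\item If $F$ has girth at least $5$, then for any edge $uv$ the set $(N_F(u)\cup N_F(v))\setminus\{u,v\}$ is independent of size at least $2k-4\ge k$, again a contradiction.
\end{itemize}

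This proves that every $T$-exact graph is $(k-2)$-degenerate. That is exactly the bound your Step 2 needs, and it holds for all of $G$ at once. It also makes both Chv\'atal's theorem and the leaf-swapping analysis unnecessary.
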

\begin{proof}
	Let $T$ be a non-star tree of order $k$, and $G$ a $T$-exact graph of order $f(T)$.
	Assume, for a contradiction, that $|V(G)|>(k-1)^2$ holds.
	
	We first observe that $G$ contains an induced subgraph $F$ of minimum degree at least $k-1$.
	Indeed, since any $d$-degenerate graph $G$ has $\alpha(G)\geq\frac{|V(G)|}{d+1}$ (and even more strongly $\chi(G)\leq d+1$), assuming $d\leq k-2$ we would obtain $\alpha(G)  \geq \frac{|V(G)|}{k-1} > k-1$, contradicting that $\alpha(G) < k$ holds by definition.
	
	Let now $uv$ be an edge in $F$.
	If $F$ contains no 3-cycles and 4-cycles, then the set $(N_F(u) \cup N_F(v))\setminus \{u,v\}$ is independent, therefore again the contradiction $\alpha(G)\geq k$ follows by $\alpha(G)\geq \alpha(F)\geq 2k-4>k-1$ for $k\geq4$.
	Thus, the proof will be done if we show that $F$ is triangle-free and $C_4$-free.
	We prove this by demonstrating that any $C_3$ or $C_4$ can be extended to a sugraph $H$ of $F$ that contains $T$---but of course $H\cong T$ cannot hold as $H$ is not cycle-free, hence $G$ is not $T$-exact, while we have assumed that it is.

	Since $T$ is a non-star tree, it has diameter at least 3.
	Then we can build $T$ sequentially in a suitably chosen vertex order $v_1,v_2,\dots,v_k$ such that $P:=v_1v_2v_3v_4$ is a path $P_4$ in $T$, and the set $\{v_i : 1\leq i\leq j\}$ induces a (connected) subtree say $T_j$ in $T$, for every $j$ with $4\leq j\leq k$.
	Note that each $v_j$ has exactly one neighbor in $T_{j-1}$; let us denote it as $v_j^-$.
	
	Assuming that $F$ contains $C_3$ or $C_4$, embed $P$ into $F$ in such a way that the subgraph induced by $\{v_1,v_2,v_3,v_4\}$ either contains $C_4$ or is a triangle with a pendant edge, hence a non-induced $T_4$ in $F$.
	(The vertices of a 3-cycle in $F$ have degree at least $k-1>2$\,; i.e., the required pendant edge is found in an ovious way if $F$ is not triangle-free.)
	Having embedded $T_{j-1}$ for any $5\leq j\leq k$, consider the vertex $v_j^-$.
	It is of degree at least $k-1$ in $F$, but has not more than $j-2\leq k-2$ preceding embedded neighbors.
	Hence, $v_j^-$ has a neighbor in $F$ (in fact at least $k-j+1$ of them) which is a suitable choice for $v_j$.
	At the end of this process we obtain a non-induced subgraph $T_k\cong T$ in $F$.
	This contradiction to $T$-exactness proves that $F$ should be $C_3$-free and $C_4$-free, hence completing the proof of the theorem.
\end{proof}
We next determine $f( K_{1,k-1})$ exactly.
\begin{theorem}
     For $k \ge 4$, $f(K_{1,k-1}) = (k-1)(k-2)$.
\end{theorem}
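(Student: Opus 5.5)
The lower bound comes from Theorem~\ref{treethree}, part 2, and the upper bound from a maximum-degree argument combined with Brooks' theorem.

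\emph{Lower bound.} $H=K_{1,k-1}$ is connected with $\alpha(H)=k-1$, so part 2 of Theorem~\ref{treethree} gives $f(K_{1,k-1})\ge (\alpha(H)-1)(k-1)=(k-2)(k-1)$. The witness is $(k-2)K_{k-1}$.

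\emph{Upper bound.} Let $G$ be $K_{1,k-1}$-exact with $n\ge (k-1)(k-2)+1$ vertices; we derive a contradiction. First list which $k$-vertex graphs are comparable with the star. The graphs contained in $K_{1,k-1}$ are exactly $K_{1,j}\cup (k-1-j)K_1$ for $0\le j\le k-1$, with $j=0$ meaning $E_k$. Any graph having a vertex of degree $k-1$ contains the star.

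\emph{Step 1.} Since $E_k\in\text{Com}^*(H)$, we have $\alpha(G)\le k-1$.

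\emph{Step 2: $\Delta(G)\le k-1$.} Let $v$ have $d(v)\ge k-1$. For any $k-1$ neighbours of $v$, the set they form with $v$ induces a graph containing the star, so it must be exactly $K_{1,k-1}$. Hence every $k-1$ neighbours of $v$ are pairwise nonadjacent, so $N(v)$ is independent. If $d(v)\ge k$, this gives an independent set of size $k$, contradicting Step 1.

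\emph{Step 3: $\alpha(G)\ge k-1$.} $G$ has no $K_k$ component, since $K_k$ contains the star. As $\Delta\le k-1$ and $k-1\ge 3$, Brooks' theorem gives $\chi(G)\le k-1$. Therefore $\alpha(G)\ge n/(k-1)>k-2$, i.e.\ $\alpha(G)\ge k-1$. (If Brooks' hypotheses fail in a degenerate way, I would argue via the degeneracy bound as in the previous theorem. I expect this step to be the main point to check carefully.)

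\emph{Step 4: contradiction.} Fix an independent set $I$ with $|I|=k-1$ and a vertex $u\notin I$. Suppose $u$ has $j$ neighbours in $I$. Then $I\cup\{u\}$ induces $K_{1,j}\cup(k-1-j)K_1$, which is comparable with the star and differs from it unless $j=k-1$. So every $u\notin I$ is adjacent to all of $I$. Consequently each $x\in I$ has degree $n-(k-1)$, and Step 2 forces $n\le 2k-2$. But for $k\ge 4$ we have $(k-1)(k-2)+1>2k-2$, because $(k-1)(k-3)+1>0$. This contradicts $n\ge (k-1)(k-2)+1$, so $f(K_{1,k-1})\le (k-1)(k-2)$.
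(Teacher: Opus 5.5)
Your proof is correct and takes a somewhat different route from the paper's. The paper splits on $\alpha(G)=k-1$ versus $\alpha(G)\le k-2$. In the first case it shows that every vertex of $B=V(G)\setminus S$ is joined to all of $S$, and then that $B$ must be independent, so $\alpha(G)\ge (k-2)^2>k-1$. In the second case it uses the degeneracy bound to find an induced subgraph of minimum degree at least $k-1$ and splits on whether that subgraph contains a triangle. With a triangle you get a star plus an edge; without one, neighbourhoods are independent of size at least $k-1$, which returns to the first case. Your Step 2 isolates the one local fact behind both cases: a vertex of degree at least $k-1$ has an independent neighbourhood. It replaces both the paper's triangle analysis and its argument that $B$ is independent, which is Step 2 applied to a vertex of $S$. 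It also makes the final contradiction a clean degree count. The price is invoking Brooks' theorem instead of the elementary greedy/degeneracy bound. Your Brooks application is fine as stated: components of maximum degree $k-1\ge 3$ are neither odd cycles nor complete unless they are $K_k$, and components of smaller maximum degree are greedily $(k-1)$-colourable. So the hedge in Step 3 is unnecessary. You can even drop Brooks entirely: if some vertex has degree $k-1$, Step 2 already gives an independent set of size $k-1$, and otherwise $\Delta(G)\le k-2$ and greedy colouring gives $\chi(G)\le k-1$. There is one small arithmetic slip in Step 4: $(k-1)(k-2)+1-(2k-2)=(k-1)(k-4)+1$, not $(k-1)(k-3)+1$. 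This is still positive for $k\ge 4$; it equals $1$ at $k=4$, where the comparison $7>6$ is tight. So the conclusion stands.
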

\begin{proof}
We begin with the upper bound.  Let $T = K_{1,k-1}$ be a star on $k \ge 4$ vertices, and $G$ a $T$-exact graph of order $n \ge (k-1)(k-2) + 1$. We know by assumption that $\alpha(G) < k$.

Consider first $\alpha(G) = k-1$. Let $S$ be an independent set of vertices where $|S| = k-1$, and let $B = V(G) \setminus S$. 
Consider any vertex $v \in B$. If $v$ is not adjacent to all vertices of $S$, then we can embed the centre of the star $T$ at $v$ and the remaining vertices of $T$ in $S$. Since there are no edges within $S$, this forms an induced subgraph of $G$ strictly contained in $T$ (hence in $Com^*(T)$).

We may assume that every vertex of $B$ is adjacent to all vertices of $S$. Note that $|B| = n - (k-1) \ge (k-1)(k-2) + 1 - (k-1) = (k-2)^2$.  Then the graph induced by $\langle B, S \rangle$ is the complete bipartite graph $K_{k-1, |B|}$ since if $B$ contains an edge $e$, we can choose a vertex $v \in S$ and $k-1$ neighbours in $B$ containing $e$ giving an induced subgraph containing $K_{1,k-1}$. Hence, $B$ is an independent set of cardinality $|B| \ge (k-2)^2 >(k-1)$ for $k \ge 4$, which contradicts our assumption on $\alpha(G)$.

So, we may assume $\alpha(G) \le k-2$. 
Now suppose that the degeneracy of $G$ is at most $k-2$. Then $\chi(G) \le k-1$ and
\[
\alpha(G) \ge \frac{|V(G)|}{\chi(G)} \ge \frac{(k-1)(k-2) + 1}{k-1} > k-2 \,.
\]
Hence $\alpha(G) \ge k-1$, which is the case already considered. 
Consequently, the degeneracy of $G$ is at least $k-1$, meaning there exists an induced subgraph with minimum degree $\delta \ge k-1$.

If $G$ contains $K_3$, we can extend it to $K_{1,k-1}$ containing $K_3$ trivially. Otherwise, for every vertex $u$, the neighbourhood $N(u)$ is triangle-free, forming an independent set of cardinality at least $k-1$, and again we are done by the first part.

Hence $f(K_{1,k-1}) \le (k-1)(k-2)$. 

The lower bound follows from $f(T) \ge (k-1)(\alpha(T) - 1)$, which in this case is $(k-1)(k-2)$ by part 3 of Theorem \ref{treethree}.
\end{proof}

\noindent \textbf{Remark}: For $k = 3$, $K_{1,2} = P_3$ and  $f(H) = 4$ (see Table 1).

 \subsection{Paths}

We now prove that $f(P_k)$ grows asymptotically with $k^2\!/2$ as $k$ gets large.

\begin{table}[ht]
 \begin{center}
  \begin{tabular}{c|ccccccccccc}
   $k$ & 2 & 3 & 4 & 5 & 6 & 7 & 8 & 9 & 10 \\
   \hline
   $f(k)$ & $\infty$ & 4 & 5 & 8 & 11 & 18 & 22 & 32 & 37
  \end{tabular}
  \caption{$f(P_k) = (k-1)\lfloor\frac{k-1}{2}\rfloor+1-(k$ mod $2)$ for $k\geq 5$.}
 \end{center}
\end{table}

	The value of $f(P_k)$ for small $k\leq 4$ is not hard to find directly, as shown also in the table exhibited in Fact 2.1.7.
	Before determining $f(P_k)$ for $k\geq 5$, let us state the following auxiliary result.
	We denote
	
	$$
	  \epsk = \left\{
	    \begin{tabular}{ccl}
	      1 & \qquad & if \ $n \equiv 1$ \ (mod $(k-1)$) \\
	      0 && otherwise
	    \end{tabular}
	  \right.
	$$

\bsk

\begin{lemma}
\label{l:linforest}
	Let $n\geq k\geq 5$ be integers.
	Then every $P_k$-free graph of order $n$ contains an induced linear forest of order $2\lceil \frac{n}{k-1} \rceil - \epsk$.
\end{lemma}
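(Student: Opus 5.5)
The plan is a greedy peeling argument using a depth-first search (DFS) tree. In each round I will remove from the current graph a set $S$ of at most $k-1$ vertices. This set will contain an induced linear forest $Q\subseteq S$ with $|Q|=2$, and it will have the property that $N(Q)\subseteq S$. Because $S$ contains the whole neighbourhood of $Q$, no vertex of $Q$ is adjacent to anything that survives the round. So the union of the $Q$'s over all rounds is again an induced linear forest. Rounds continue until the graph is empty. The target count $2\lceil n/(k-1)\rceil-\epsk$ then comes from counting rounds. The only losses occur with isolated vertices, which I treat separately.

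\emph{Key step: finding $S$ in a component $C$ with $|C|\ge 2$.} I run DFS on $C$ and obtain a rooted tree. Since $C$ is $P_k$-free, every root-to-leaf path of this tree has at most $k-1$ vertices. Every non-tree edge joins an ancestor to a descendant.
\begin{itemize}
\item Let $u$ be a deepest leaf and $p$ its parent. Every child of $p$ is then a leaf, because $u$ is deepest.
\item \textbf{Case 1: $p$ has a second child $w$.} Take $Q=\{u,w\}$, which is either an edge or two isolated vertices; both are induced linear forests. The neighbours of $u$ and $w$ lie among their ancestors, i.e.\ on the root--$p$ path, which has at most $k-2$ vertices. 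So $S=(\text{root--}p\text{ path})\cup\{u,w\}$ has at most $k-1$ vertices and contains $N(Q)$.
\item \textbf{Case 2: $u$ is the only child of $p$.} Take $Q=\{p,u\}$, an induced edge. The only descendant of $p$ is $u$, so $N(Q)$ lies on the root--$u$ path. Take $S$ to be that path, which has at most $k-1$ vertices.
\end{itemize}
In both cases we gain $2$ vertices while removing at most $k-1$. Removing $S$ does not affect other components, so the procedure can be iterated.

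\emph{Isolated vertices and counting.} If the current graph has at least two isolated vertices, I remove two of them together. This also gains $2$ at a cost of $2\le k-1$. The procedure gains fewer than $2$ in at most one round, namely a final round consisting of a single isolated vertex, which gains $1$. Let $t$ be the number of rounds.
\begin{itemize}
\item If every round gains $2$, then $n\le (k-1)t$, hence $t\ge\lceil n/(k-1)\rceil$ and the forest has order $2t\ge 2\lceil n/(k-1)\rceil$.
\item If the last round is a lone vertex, then $n\le (k-1)(t-1)+1$ and the forest has order $2t-1$.
\begin{itemize}
\item If $n\not\equiv 1 \pmod{k-1}$, then $\lceil n/(k-1)\rceil\le t-1$, so $2t-1$ exceeds the target.
\item If $n\equiv 1\pmod{k-1}$, then $\lceil n/(k-1)\rceil\le t$, so $2t-1\ge 2\lceil n/(k-1)\rceil-1$, which is exactly what the $\epsk$ term allows.
\end{itemize}
\end{itemize}

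\emph{Where the difficulty lies.} The main point to check carefully is the DFS step: that $N(Q)\subseteq S$ really holds, which relies on non-tree edges of a DFS tree joining only ancestor--descendant pairs, and that the size bounds hold in both cases. The counting is then routine. I do not expect the hypothesis $k\ge 5$ to be needed in this argument; it presumably matters only where the lemma is applied later.
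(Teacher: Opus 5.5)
\paragraph{The gap: Case~1 can cost $k$ vertices.} Your size bound in Case~1 is off by one, and your counting depends on it. The root--$u$ path has at most $k-1$ vertices, so the root--$p$ path has at most $k-2$. Hence $S=(\text{root--}p\text{ path})\cup\{u,w\}$ has at most $k$ vertices, not $k-1$. A round that deletes $k$ vertices while gaining only $2$ breaks your inequality $n\le (k-1)t$.

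This case actually occurs. Take $k=5$ and the tree with edges $ra,ap,pu,pw$. It is $P_5$-free, since a tree with a vertex of degree $3$ has no Hamiltonian path. DFS from $r$ returns the tree itself, with $u,w$ deepest sibling leaves. Your single round deletes all five vertices and yields a forest of order $2$. The lemma requires $2\lceil 5/4\rceil-\epsilon_{5,5}=3$.

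\paragraph{How to repair it.} One more use of $P_k$-freeness closes the gap. In Case~1 put $S=\{u,w\}\cup N(u)\cup N(w)$. This set lies inside $\{u,w\}$ together with the root--$p$ path $v_0v_1\cdots v_d$, where $v_0$ is the root and $v_d=p$.
\begin{itemize}
\item If $d+1\le k-3$, then $|S|\le k-1$ trivially.
\item If $d+1=k-2$, then $d\ge 1$, so $v_0\ne p$. Suppose $v_0$ were adjacent to $u$. Then $w\,v_dv_{d-1}\cdots v_0\,u$ would be a path on $d+3=k$ vertices, which is impossible. By symmetry $v_0$ is not adjacent to $w$ either. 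So $v_0\notin S$ and $|S|\le k-1$.
\end{itemize}
With this fix the rest of your argument is correct. That covers the absence of edges between each $Q$ and later survivors, the pairing of isolated vertices, and the residue analysis that produces $\epsilon_{n,k}$.

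\paragraph{Comparison with the paper.} The paper avoids this case analysis by repeatedly deleting a longest path $P$ of the remaining graph. Then $|V(P)|\le k-1$ holds automatically by $P_k$-freeness. By maximality, the two ends of $P$ have all their remaining neighbours on $P$. This is exactly your invariant $N(Q)\subseteq S$, with $Q$ the ends of $P$ and $S=V(P)$.
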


\begin{proof}

	Let $G$ be a $P_k$-free graph on $n$ vertices, $n\geq k$.
	We choose a collection of vertex-disjoint paths $F_1,F_2,\dots$ which partition the vertex set of $G$, and satisfy the following property:
	\begin{itemize}
	 \item For every $i\geq 1$, $F_i$ is a longest path in the graph induced by $\bigcup_{j\geq i} V(F_i)$.
	\end{itemize}
	Such a collection of paths is easy to find by sequential selection, in each step taking and omitting a longest path in the remaining graph.
	Of course, $k-1 \geq |V(F_1)| \geq |V(F_2)| \geq \ldots$ holds, and $\sum_{i\geq 1} |V(F_i)| = n$.
	
	We note that if $i$ is not the largest index in the collection, and $|V(F_i)|>2$, then either the two ends of $F_i$ are nonadjacent or $V(F)$ induces a connected component of $G-\bigcup_{j<i} V(F_j)$.
	
	Now we select those $F_i$ which are single vertices, and from all the others we select the two ends of each path.
	We claim that the set $S$ obtained in this way induces a linear forest each of whose components is an edge or a vertex.
	Indeed, the singleton paths are isolated in $S$, and the (possibly adjacent) two ends of a non-singleton $F_i$ are nonadjacent to the entire $\bigcup_{j>i} V(F_j)$ due to the non-extendability of $F_i$, hence in particular nonadjacent to the ends of those paths.
	
	Graph $G$ is assumed to be $P_k$-free, therefore the number of paths is at least $\lceil \frac{n}{k-1} \rceil$.
	From each non-singleton path $F_i$ we selected $2 \geq \lceil \frac{2}{k-1}\cdot|F_i| \, \rceil$ vertices, and the singleton ones are selected completely.
	So, if there are $s$ singletons selected, then the number of paths is at least $s+\frac{n-s}{k-1}$ and the number of selected vertices is not less than $s+2\lceil \frac{n-s}{k-1} \rceil$.
	The worst case occurs when $s=0$ or $s=1$, depending on the residue of $n$ modulo $k-1$.
	This implies the claimed lower bound.
\end{proof}
	
	\begin{theorem}
	For all $k\ge 5$,
	
	$$
	  f(P_k) = \left\{
	    \begin{tabular}{lll}
	      $\dfrac{(k-1)^2}{2}$ && if\/ $k$ is \vspace{2ex} odd, \\
	      $\dfrac{(k-1)(k-2)}{2}+1$ & \qquad & if\/ $k$ is even.
	    \end{tabular}
	  \right.
	$$

	\end{theorem}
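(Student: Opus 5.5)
The plan is to prove matching upper and lower bounds. For the lower bound I will use disjoint unions of cliques, as in Theorem~\ref{treethree}. For the upper bound I will apply Lemma~\ref{l:linforest} after reducing to $P_k$-free graphs. The guiding remark is this: a $k$-vertex graph is comparable with $P_k$ exactly when it contains $P_k$ or is a linear forest on $k$ vertices. Hence $G$ is $P_k$-exact if and only if two conditions hold. First, every $k$-set spanning a (not necessarily induced) $P_k$ induces exactly $P_k$. Second, no $k$-set induces a linear forest other than $P_k$.

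\emph{Lower bound.} For $k$ odd take $G=\frac{k-1}{2}K_{k-1}$, which has $\frac{(k-1)^2}{2}$ vertices. For $k$ even take $G=\frac{k-2}{2}K_{k-1}\cup K_1$, which has $\frac{(k-1)(k-2)}{2}+1$ vertices. In both cases every component has fewer than $k$ vertices, so no $k$-set spans a connected graph, and in particular none contains $P_k$. A $k$-set that meets each clique in at most two vertices has at most $2\cdot\frac{k-1}{2}=k-1$ vertices in the odd case, and at most $2\cdot\frac{k-2}{2}+1=k-1$ in the even case. So every $k$-set contains three vertices of one clique. It then induces a graph containing a triangle but not $P_k$, and is therefore incomparable with $P_k$. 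Hence $G$ is $P_k$-exact.

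\emph{Upper bound.} Let $G$ be $P_k$-exact of order $n=\frac{(k-1)^2}{2}+1$ when $k$ is odd, respectively $n=\frac{(k-1)(k-2)}{2}+2$ when $k$ is even. By Fact~2.1.4 it suffices to treat these values of $n$. The first goal is to show that $G$ is $P_k$-free. Granting this, Lemma~\ref{l:linforest} yields an induced linear forest of order $2\lceil n/(k-1)\rceil-\epsk$. For $k$ odd we have $n\equiv 1 \pmod{k-1}$, so this order is $2\cdot\frac{k+1}{2}-1=k$. For $k$ even we have $n\equiv 2 \pmod{k-1}$ (as $k\ge 5$), so it is $2\cdot\frac{k}{2}-0=k$. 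Any $k$ of its vertices induce a linear forest on $k$ vertices. This forest is not $P_k$, because $G$ is $P_k$-free, so it lies in $\mathrm{Com}^*(P_k)$. This is a contradiction.

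\emph{Main obstacle: $G$ is $P_k$-free.} I expect this reduction to be the hardest step. Any $P_k$ in $G$ must be induced, so fix an induced path $X=v_1\cdots v_k$ and a vertex $y\notin X$, which exists since $n\ge k+2$. I will run a case analysis on $N(y)\cap X$ and exhibit a $k$-set that either spans a non-induced $P_k$ or induces a disconnected linear forest.
\begin{itemize}
\item If $N(y)\cap X=\emptyset$, then $(X\setminus\{v_k\})\cup\{y\}$ induces $P_{k-1}\cup K_1$.
\item If $y\sim v_1$ (symmetrically $y\sim v_k$), then $G$ contains a $P_{k+1}$. If that path is induced, deleting one interior vertex next to an end gives an induced $P_{k-1}\cup K_1$. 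Otherwise some $k$-subpath is non-induced.
\item If $y$ has a single interior neighbour $v_i$, delete a path vertex adjacent to $v_i$ so that the remaining graph splits into paths. For example, when $i=2$ the set $\{v_1,v_2,y,v_4,\dots,v_k\}$ induces $P_3\cup P_{k-3}$.
\item If $y$ has two or more interior neighbours $v_i,v_j$ with $i<j$, detour through $y$. If $j=i+1$, then $v_1\cdots v_i\,y\,v_{i+1}\cdots v_k$ is a $P_{k+1}$, which reduces to the previous case. Otherwise I will choose a subset of $X\cup\{y\}$ containing a cycle through $y$ together with a spanning path, so that it spans a non-induced $P_k$.
\end{itemize}
I would first try to organize the last case by taking the two neighbours of $y$ closest to an end of $X$. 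If the case analysis becomes unwieldy, the fallback is to adapt the proof of Lemma~\ref{l:linforest}. There I would allow path classes of exactly $k$ vertices, which are induced with nonadjacent ends, and use the above local obstructions to show that such classes cannot coexist with the remaining $n-k$ vertices.
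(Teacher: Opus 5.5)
\textbf{There is a genuine gap in the step you flagged as the main obstacle.} Your lower bound is correct and matches the paper. So is the final residue computation showing that Lemma~\ref{l:linforest} gives an induced linear forest of order exactly $k$ once $G$ is $P_k$-free. The problem is the plan to reach a contradiction inside $X\cup\{y\}$ for a single outside vertex $y$. This cannot work, because $X\cup\{y\}$ can itself be $P_k$-exact.

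Your argument uses $n\ge k+2$ only to produce $y$. It would therefore equally show that no $P_k$-exact graph on $k+1$ vertices contains $P_k$, which is false for $C_{k+1}$. Concretely, the failures are these.
\begin{itemize}
\item In your second case, take $N(y)\cap X=\{v_1,v_k\}$. Then $X\cup\{y\}$ induces $C_{k+1}$. Both $k$-subpaths of $yv_1\cdots v_k$ are induced, so the claim that ``some $k$-subpath is non-induced'' fails.
\item Your fourth case fails as well. For $k\ge 8$ and $N(y)\cap X=\{v_3,v_6\}$, every $k$-set $X\cup\{y\}\setminus\{v_m\}$ is incomparable with $P_k$:
\begin{itemize}
\item Deleting $v_1$ or $v_k$ leaves a cycle but no Hamiltonian path. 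The two degree-one vertices must be the ends of such a path, which forces it through $v_3$ and $v_6$ with $v_4,v_5,y$ in between, and no ordering of these three works.
\item Deleting $v_2$, or $v_m$ with $7\le m\le k-1$, leaves a disconnected graph with a cycle.
\item Deleting one of $v_3,\dots,v_6$ leaves a vertex of degree $3$, in a graph that is disconnected or is a tree with $k-1$ edges.
\end{itemize}
So there is neither a non-induced spanning $P_k$ nor a disconnected induced linear forest to choose.
\end{itemize}

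\textbf{What is missing is a global, quantitative argument that uses many outside vertices at once.} Your fallback only gestures at this, and the local picture shows such path classes \emph{can} coexist with outside vertices. The paper argues as follows.
\begin{itemize}
\item If $G$ contains $P_{k+1}$, that path induces a $C_{k+1}$ which is a whole component, so $|G|=k+1$.
\item Otherwise, let $P=v_1\cdots v_k$ be a longest path. It is induced, and $v_1,v_k$ have no neighbours off $P$.
\item Every outside vertex has at least two non-consecutive interior neighbours $v_i,v_j$.
\item Take a longest path $P'$ in $G-V(P)$ ending at such a $w$. The non-induced paths $P'v_i\cdots v_k$ and $P'v_j\cdots v_1$ must have order at most $k-1$. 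Summing the two inequalities gives $|P'|\le\lfloor (k-5)/2\rfloor$.
\item Apply Lemma~\ref{l:linforest} to $G-V(P)$ with this much smaller path bound. This gives an induced linear forest of order roughly $4(|G|-k)/(k-5)$.
\item Adding $v_1,v_k$ keeps it a disconnected induced linear forest, so it has at most $k-3$ vertices.
\item Hence $|G|\le (k^2-3k+10)/4$, which is below the target value.
\end{itemize}
Note that the paper never proves your intermediate claim that $P_k$-exact graphs on $n\ge k+2$ vertices are $P_k$-free. It only proves that $P_k$-containing exact graphs are smaller than the target, and that weaker statement is all your reduction needs.
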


\begin{proof}
	The independence number of $P_k$ equals $\lceil k/2 \rceil$.
	So, as done in Theorem~\ref{treethree}, one can take a graph whose $\lceil k/2 \rceil-1$ (i.e., depending on the parity of $k$, either $(k-1)/2$ or $(k-2)/2$) components are $K_{k-1}$.
	This matches the claimed formula if $k$ is odd.
	For $k$ even, we can add a further isolated vertex.
	In all these graphs, every $k$-vertex induced subgraph contains a triangle (thus cannot be a subgraph of $P_k$), and is disconnected (hence $P_k$-free).
	This implies that $f(P_k)$ cannot be smaller than claimed.
	
	For an upper bound of the same value, let $G$ be a $P_k$-exact graph with $f(P_k)$ vertices.
	Should $G$ be $P_k$-free, we obtain by Lemma~\ref{l:linforest} that any graph with more vertices than the claimed formula contains an induced linear forest of order $k$, hence a proper subgraph of $P_k$.
	Indeed, for odd $k$,
	
	$$
	  2\left\lceil \frac{\frac{(k-1)^2}{2}+1}{k-1} \right\rceil = 2 \left\lceil \frac{k-1}{2} + \frac{1}{k-1} \right\rceil = k+1
	$$
	and for even $k$,
	
	$$
	  2\left\lceil \frac{\frac{(k-1)(k-2)}{2}+2}{k-1} \right\rceil = 2 \left\lceil \frac{k-2}{2} + \frac{2}{k-1} \right\rceil = k \,.
	$$	
	This contradicts our assumptions on $G$.
	So, $G$ contains one or more copies of $P_k$, all of which must be induced subgraphs.
	
	If $v_1\dots v_{k+1}$ is a path of order $k+1$ in $G$, then omitting any of its ends we obtain an induced $k$-path, but the omission of a vertex from the middle cannot yield a (disconnected) linear forest, hence the $(k+1)$-path actually induces a cycle.
	If there exists a further incident edge going out from the cycle, say $v_1w$, then for a similar reason, we obtain that $v_kw$ is also an edge.
	But then $v_{k+1}v_kwv_1v_2\dots v_{k-1}$ is a path with a chord $v_{k+1}v_1$, a contradiction.
	So $\{v_1,\dots,v_{k+1}\}$ induces a connected component.
	Then any further vertex with $v_1\dots v_{k-1}$ would induce disconnected a linear forest, therefore no such vertex can exist and $|G|=k+1$ would follow, far from being extremal if $k>4$.
	
	The final case, where the longest paths in $G$ have exactly $k$ vertices, is a little more complicated.
	Observe that
	
	$$
	  \frac{(k-1)(k-2)}{2}+1 < \frac{(k-1)^2}{2}
	$$
	holds for all $k>3$, so that the proof will be done if we show that $|G|$ is smaller than $\frac{1}{2}(k-1)(k-2)+1$ if $k\geq 5$.
	
	Let $P=v_1\dots v_k$ be a longest path; it is induced, and $P-v_i$ is a disconnected linear forest of order $k-1$ for every $2\leq i\leq k-1$.
	Thus, as $v_1$ and $v_k$ are of degree 1 by the non-extendability of $P$, every vertex $w$ not in $P$ has at least two neighors among the internal vertices of $P$.
	Choose a longest path $P'$ in $G-V(P)$, and let $w$ be an endpoint of $P$.
	Consider two neighbors of $w$ in $P$, say $v_i$ and $v_j$, $i<j$.
	We have $j\geq i+2$, otherwise $v_1\dots v_iwv_{i+1}\dots v_k$ would be a path longer than $P$.
	
	Observe that both $P'v_iv_{i+1}\dots v_k$ and $P'v_jv_{j-1}\dots v_1$ are non-induced paths, having at least one chord $v_jw$ and $v_iw$, respectively
	By assumption their order cannot exceed $k$, and since they are non-induced, they actually have order at most $k-1$.
	Consequently,
	
	\begin{eqnarray}
	 k-1 & \geq & |P'| + |P| - (i-1) \,, \nonumber \\
	 k-1 & \geq & |P'| + j \,. \nonumber
	\end{eqnarray}
	Substituting $|P|=k$ and summing up, we obtain
	
	$$
	  2k-2 \geq 2|P'|+k+(j-i)+1 \geq 2|P'|+k+3 \,.
	$$
	Hence,
	
	$$
	  |P'| \leq \left\lfloor \frac{k-5}{2} \right\rfloor .
	$$
	This means that the graph $G'=G-V(P)$ of order $|G|-k$ is $P_{\left\lfloor \frac{k-3}{2} \right\rfloor}$-free.
	(For $k=5$ we otained that $G\cong P_5$ must hold if $P_5\subseteq G$ and $G$ is a $P_5$-exact $P_6$-free graph!)
	On applying Lemma~\ref{l:linforest} it follows that the number of vertices in the largest induced linear forest $F$ in $G'$ is not smaller than
	
	$$
	  2 \left\lceil \frac{|G|-k}{\left\lfloor \frac{k-5}{2} \right\rfloor} \right\rceil - \epsilon_{|G|-k,\left\lfloor \frac{k-3}{2} \right\rfloor}
	  \geq
	  4 \cdot \frac{|G|-k}{k-5} - 1 \,.
	$$
	This $F$ can be extended to an induced disconnected linear forest in $G$ with at least two further vertices, namely $v_1$ and $v_k$.
	Therefore, $|F|\leq k-3$ because $G$ is $P_k$-exact.
	Consequently,
	
	$$
	  4 \cdot \frac{|G|-k}{k-5} - 1 \leq |F| \leq k-3 \,,
	$$
	$$
	  |G| \leq \frac{(k-2)(k-5)}{4} + k = \frac{k^2 - 3k + 10}{4}
	  <
	  \frac{(k-1)(k-2)}{2} + 1
	$$
	for all $k\geq 5$.
	This implies that a graph with maximum path length $k$ cannot be extremal for $f(P_k)$, hence completing the proof.	
\end{proof}

\subsection{Upper bound for $f(tH)$ and exact value for  $f(nK_2)$}

We first present an upper bound for $f(tH)$, for the graph consisting of $t$ vertex-disjoint copies  of a graph $H$.

\begin{proposition}
Let $H$ be a graph on $k \geq 3$  vertices and independence number $\alpha(H)$.  Then for $t\geq 2$, $f(tH) \leq R( H ,K_{tk}) + \left (\left \lceil \frac{tk}{\alpha(H)} \right \rceil - 1 \right )|H|  - 1$.   
\end{proposition}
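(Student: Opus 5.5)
Set $N = R(H,K_{tk}) + \left(\left\lceil \frac{tk}{\alpha(H)}\right\rceil - 1\right)|H|$ and let $G$ be any graph on $N$ vertices. The plan is to show that $G$ has a $tk$-vertex induced subgraph lying in $\text{Com}^*(tH)$, so $G$ is not $tH$-exact and $f(tH)\le N-1$. The argument follows the greedy disjoint-copies scheme used in the proof of Theorem \ref{impthm}.

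First, if $G$ has an independent set of size $tk$, then $E_{tk}$ is a proper subgraph of $tH$, so $E_{tk}\in\text{Com}^*(tH)$ because $H$ has at least one edge. (If $H=E_k$, then $tH=E_{tk}$ and the statement is degenerate; I would treat that case separately or exclude it.) So assume $\alpha(G)<tk$.

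Next, while at least $R(H,K_{tk})$ vertices remain, Ramsey's definition together with $\alpha<tk$ yields a (not necessarily induced) copy of $H$; we delete its vertices and repeat. Because $N - \left(\left\lceil \frac{tk}{\alpha(H)}\right\rceil -1\right)|H| = R(H,K_{tk})$, we obtain $m=\left\lceil \frac{tk}{\alpha(H)}\right\rceil$ vertex-disjoint copies $H(1),\dots,H(m)$ of $H$.

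The key step is to use these copies to build an induced $tk$-vertex subgraph that is comparable with but not equal to $tH$. If $m\ge t$, the union of $H(1),\dots,H(t)$ is a $tk$-vertex set whose induced subgraph contains $tH$. It is in $\text{Com}^*(tH)$ unless it induces exactly $tH$, which happens when each copy is induced and there are no edges between copies. So the real difficulty is the case where every such union induces exactly $tH$. Here I would use the independence structure: choose a maximum independent set $I_j$ of size $\alpha(H)$ in each $H(j)$. The hope is that these combine into an independent set of size at least $m\,\alpha(H)\ge tk$, which contradicts $\alpha(G)<tk$. This works if the copies have no edges between them. I expect the main obstacle to be controlling cross edges among all $m$ copies, since exactness only constrains $t$ copies at a time.

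My fallback is a swapping argument. If some edge joins $H(i)$ and $H(j)$, take $t$ copies including $H(i)$ and $H(j)$; their union strictly contains $tH$ and gives a member of $\text{Com}^*(tH)$. If no copy carries an extra edge, the copies are pairwise anticomplete and the independent-set union gives the contradiction. This requires $m\ge t$, which holds because $\alpha(H)\le k$. Both cases thus produce a forbidden induced subgraph, proving $f(tH)\le N-1$.
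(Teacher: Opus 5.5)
Your proposal is correct and follows the paper's proof: you greedily extract $\lceil tk/\alpha(H)\rceil$ disjoint copies of $H$, using $R(H,K_{tk})$ leftover vertices. A non-induced copy or a cross edge then yields a member of $\text{Com}^*(tH)$, by taking $t$ copies that include it (possible since $\lceil tk/\alpha(H)\rceil\ge t$); otherwise the maximum independent sets of the induced, pairwise anticomplete copies combine into an independent set of size at least $tk$. Your explicit selection of $t$ copies and your exclusion of the degenerate case $H=E_k$ are slightly more careful than the paper's write-up, which phrases the cross-edge case in terms of the whole $qk$-vertex union $qH$.
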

\begin{proof}

Let $q = \left \lceil \frac{tk}{\alpha(H)} \right \rceil$ and consider a red-blue colouring of $E(K_n)$, for any  $n \geq R( H ,K_{tk}) + \left (\left \lceil \frac{tk}{\alpha(H)} \right \rceil - 1 \right )|H| $.   

If a blue $K_{tk}$ occurs,  we are done,  as the red graph $G$ contains an induced $E_{tk} \in \text{Com}^*(tH)$. Otherwise, by greedy deletion of copies of $H$, we have $q$ vertex-disjoint copies of $H$, each of which must be induced for otherwise we get a member of Com$^*(tH)$.  In this way  the graph   $F = qH$ is found, the union of all the $q$ induced copies of $H$.   If two vertex-disjoint copies of $H$ have an edge between them,  we have that $ F \in \text{Com}^*(tH)$.  

Now each copy of $H$ contains $\alpha(H)$ independent vertices, hence $F$ contains $q\alpha(H) =\alpha(H)\left \lceil \frac{tk}{\alpha(H)} \right \rceil \geq tk$ independent vertices and thus $G$ contains an induced $E_{tk}$.
\end{proof}

\noindent \textbf{Remark}: For $f(tK_k)$ we get $f(tK_k) \leq R(K_k , K_{tk}) +  (tk  - 1)k  - 1$.
Since in the particular case of $k= 2$ we trivially have $R(K_2,K_{2t})= 2t$, the upper bound reduces to $f(tK_2) \leq 2t +  (2t-1)2  - 1 = 6t - 3$\,;  and for $k=3$ we obtain $f(tK_3)  \leq R(K_3 ,K_{3t}) + (3t-1)3  - 1 = R(K_3,K_{3t}) + 9t - 4$.\\

Our last theorem concerns the exact determination of  $f(nK_2)$.  Recall the following result from \cite{LORIMER199291}:

\begin{theorem}
\label{R-match}
$R(nK_2, K_m) = 2n + m - 2$. Hence $R(nK_2, K_{2n}) = 4n-2$.
\end{theorem}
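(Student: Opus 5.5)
This statement is quoted from \cite{LORIMER199291}, so I would prove it directly by a matching lower-bound construction and a Gallai-type upper-bound argument. In Ramsey language, the claim is that every graph $G$ on $2n+m-2$ vertices contains $n$ disjoint edges or an independent set of size $m$. A construction on $2n+m-3$ vertices shows this is sharp. The second sentence then follows by substituting $m=2n$: $2n+2n-2=4n-2$.

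\textbf{Lower bound.} Take $G = K_{2n-1}\cup E_{m-2}$, which has $2n+m-3$ vertices.
\begin{enumerate}
\item $G$ has matching number $n-1$, since every edge lies inside the $K_{2n-1}$, which has only $2n-1$ vertices.
\item An independent set uses at most one vertex of the clique, so $\alpha(G) = 1+(m-2) = m-1$.
\end{enumerate}
Hence $R(nK_2,K_m) > 2n+m-3$.

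\textbf{Upper bound.} Let $|V(G)| = N = 2n+m-2$, and suppose $\mu(G)\le n-1$. I would bound $\alpha(G)$ from below using the Tutte--Berge formula. It gives a set $S$ such that
\[
N - 2\mu(G) = \mathrm{odd}(G-S) - |S|,
\]
where $\mathrm{odd}(G-S)$ is the number of odd components of $G-S$. Choosing one vertex from each component of $G-S$ gives an independent set. Therefore
\[
\alpha(G) \ge \mathrm{odd}(G-S) = N-2\mu(G)+|S| \ge N - 2(n-1) = m,
\]
which is what we need.

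Alternatively, one can avoid Tutte--Berge by an elementary argument. Take a maximum matching $M$ with $\mu\le n-1$ edges. The $N-2\mu \ge m$ unmatched vertices form an independent set $I$. If $|I|\ge m$ we are done. The only subtlety is that this holds automatically here: $N-2\mu \ge 2n+m-2-2n+2 = m$. So even the trivial observation that the unmatched vertices are independent, by maximality of $M$, suffices.

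The main point to double-check is exactly that arithmetic, $N-2\mu\ge m$ when $\mu\le n-1$, since it makes the upper bound essentially immediate. The lower-bound construction is likewise routine.
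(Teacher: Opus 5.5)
Your proof is correct. The paper gives no proof of this statement; it quotes it from Lorimer and uses it as a black box. Your argument is therefore a self-contained replacement for a citation.

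Both of your ingredients already appear in the paper's own proof for $f(nK_2)$:
\begin{itemize}
\item Your construction $K_{2n-1}\cup E_{m-2}$, taken at $m=2n$, is the paper's extremal graph $K_{2n-1}\cup E_{2n-3}$ for $f(nK_2)\ge 4n-4$ plus one extra isolated vertex.
\item Your observation that the vertices missed by a maximum matching are independent is exactly the maximality argument the paper invokes there repeatedly.
\end{itemize}

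Since $N-2\mu(G)\ge (2n+m-2)-2(n-1)=m$ whenever $\mu(G)\le n-1$, that observation is the entire upper bound. Your Tutte--Berge paragraph is correct but superfluous, and you can drop it.

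One small point: your construction uses $E_{m-2}$, which silently assumes $m\ge 2$. This hypothesis is genuinely needed. For $m=1$ and $n\ge 2$ one has $R(nK_2,K_1)=1\neq 2n-1$, so the formula as stated is false there. The paper only applies it with $m=2n\ge 4$, so nothing downstream is affected, but you should state the restriction $m\ge 2$ explicitly.
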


\begin{lemma}
\label{l:nK2}
Suppose $G$ contains $(n+1)K_2$, $n \ge 2$. Then $G$ contains a $2n$-vertex induced subgraph $F \in \text{Com}^*(nK_2)$.
\end{lemma}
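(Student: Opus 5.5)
Start from $n+1$ vertex-disjoint edges $e_1=a_1b_1,\dots,e_{n+1}=a_{n+1}b_1'$ in $G$; call them $a_ib_i$. The goal is to find $2n$ vertices whose induced subgraph $F$ either strictly contains $nK_2$ or is strictly contained in $nK_2$.

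\textbf{First attempt: induced matching $e_1,\dots,e_n$.} Consider the $2n$ vertices $V_n=\{a_i,b_i: i\le n\}$. Their induced graph contains $nK_2$. If it has any edge other than $e_1,\dots,e_n$, it strictly contains $nK_2$, so it lies in $\text{Com}^*(nK_2)$ and we are done. The same reasoning applies to any $n$ of the $n+1$ edges. So we may assume that every $n$ of them induce exactly $nK_2$. Since $n\ge 2$, any two of the edges lie together in some $n$-subset. Hence $M=\{e_1,\dots,e_{n+1}\}$ is an induced matching on $2n+2$ vertices.

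\textbf{Second step: drop one edge entirely.} Now take the $2n$ vertices $a_1,\dots,a_{n+1}$ together with $b_1,\dots,b_{n-1}$. This set omits $b_n$ and $b_{n+1}$. Because $M$ is induced, the induced subgraph is $(n-1)K_2\cup 2K_1$. This graph has exactly $2n$ vertices and only $n-1$ edges, and it is a proper subgraph of $nK_2$. So it belongs to $\text{Com}^*(nK_2)$, which finishes the proof.

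\textbf{Main obstacle.} The one point needing care is the reduction to the induced case. We must check that an extra edge among the $2n$ chosen vertices really gives strict containment. It does: the vertex count is fixed at $2n$ and the edge count exceeds $n$ while a copy of $nK_2$ is still present. We must also check that, after this reduction, every pair of edges of $M$ is covered by some $n$-subset. That is where $n\ge 2$ is used, since then $n+1\ge 3$ and any pair extends to an $n$-subset.
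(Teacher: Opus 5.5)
Your proof is correct and is essentially the paper's argument: if the $(n+1)K_2$ is not induced, an extra edge joins two matching edges, and any $n$ of the matching edges that include those two induce a graph strictly containing $nK_2$; this is where $n\ge 2$ is needed. If it is induced, deleting one endpoint from each of two matching edges gives $(n-1)K_2\cup 2K_1$. You run the two cases in the opposite order, and the label $b_1'$ for $e_{n+1}$ is a harmless typo for $b_{n+1}$.
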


\begin{proof}
Consider $M = (n+1)K_2$ with edges $e_j = (x_j, y_j)$, $j = 1, \dots, n+1$.
If $M$ is induced then $M^* = \{ e_1, \dots, e_{n-1} \} \cup (x_n, x_{n+1}) = (n-1)K_2 \cup 2K_1 \in \text{Com}^*(H)$.
Otherwise, without loss of generality, there is an edge $e = (x_1, x_2)$ and therefore taking the graph $F$ induced by $\{ e_1, \dots, e_{n} \}$, we have $F \in \text{Com}^*(nK_2)$.
\end{proof}

\begin{theorem}
$\mbox{ }$\
\begin{enumerate}
\item{$f(nK_2)=3n$ for $n=2,3$.}
\item{$f(nK_2)=4n-4$ for $n \geq 4$.}
\end{enumerate}
\end{theorem}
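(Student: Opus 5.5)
The plan is to determine $f(nK_2)$ with $k=2n$ by matching constructions against the upper bound coming from Theorem~\ref{R-match} and Lemma~\ref{l:nK2}. Throughout I use the description of $\text{Com}^*(nK_2)$ on $2n$ vertices. A $2n$-vertex graph $F\neq nK_2$ is comparable with $nK_2$ exactly when one of two things happens. Either $F$ has maximum degree at most $1$ (it is then a proper subgraph of $nK_2$, including $E_{2n}$). Or $F$ has a perfect matching but is not itself $nK_2$ (it then properly contains $nK_2$).

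\emph{Lower bounds.} For $n\geq 4$ I will take $G=K_{2n-1}\cup (2n-3)K_1$, on $4n-4$ vertices. Consider a $2n$-set with $j$ vertices in the clique.
\begin{itemize}
\item If $j\le 2$, the set needs at least $2n-2$ isolated vertices, but only $2n-3$ exist, so this case cannot occur.
\item If $j\ge 3$, the induced graph contains a triangle, so it is not a subgraph of $nK_2$.
\item Since $j\le 2n-1$, the set contains an isolated vertex, so the induced graph has no perfect matching and does not contain $nK_2$.
\end{itemize}
Hence every $2n$-vertex induced subgraph is incomparable with $nK_2$, and $f(nK_2)\ge 4n-4$.

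For $n=2$ the value $f(2K_2)=f(C_4)=6$ is already in the table of Fact~2.1. For $n=3$ I need a $9$-vertex $3K_2$-exact graph. I will search among vertex-transitive graphs on $9$ vertices, namely circulants and $K_3\square K_3$ and their complements. For such a graph one only has to check, up to symmetry, that every $6$-set inducing a perfect matching induces exactly $3K_2$, and that no $6$-set induces a graph of maximum degree at most $1$ other than $3K_2$. Note that $3K_3$ itself fails, since two of its triangles induce a $6$-vertex graph with a perfect matching. So this search is a finite check that I expect to settle $f(3K_2)\ge 9$. For $n=2,3$ this beats $4n-4$.

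\emph{Upper bounds.} Let $G$ be $nK_2$-exact on $N$ vertices. The exactness conditions give two constraints.
\begin{itemize}
\item $\alpha(G)\le 2n-1$, since an induced $E_{2n}$ is comparable with $nK_2$.
\item By Lemma~\ref{l:nK2}, $G$ has no $(n+1)K_2$, so $\nu(G)\le n$.
\end{itemize}
Theorem~\ref{R-match} with $m=2n$ only yields $N\le 4n-3$, so the real work is to exclude $N=4n-3$ when $n\ge 4$, and $N\ge 3n+1$ when $n=2,3$.

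I will take a maximum matching $M$ with $\nu=|M|\le n$ and let $I=V(G)\setminus V(M)$. The set $I$ is independent and has $|I|\ge N-2\nu$. Standard augmenting-path arguments give two facts about each edge $xy$ of $M$. First, $x$ and $y$ cannot have distinct neighbours in $I$. Second, if both have a neighbour in $I$, that neighbour is a single common vertex $w$, forming a triangle $xyw$. So for each edge of $M$ I can pick an endpoint with no neighbour in $I$, or handle the triangle case separately. Adding these endpoints to $I$ produces a set $S$ of size about $|I|+\nu$ whose induced graph has edges only among the chosen endpoints.

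The main obstacle is controlling those edges. I will then choose $2n$ vertices of $S$ so that the induced graph either has maximum degree at most $1$ without being $nK_2$ (forbidden), or is forced to contain a perfect matching with an extra edge (again forbidden). The counting is tightest when $\nu=n$ and $N=4n-3$, where $|I|\ge 2n-3$. Here I will argue via the neighbourhoods of the $M$-edges in $I$ and use $\alpha\le 2n-1$ to force the contradiction. The small cases $n=2,3$ with $N=3n+1$ will be finished by the same case analysis, done more carefully by hand.
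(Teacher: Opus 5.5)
Your construction for $n\ge 4$ is the paper's, $K_{2n-1}\cup E_{2n-3}$, and your check of it is correct. The small cases of the lower bound, however, are not established.

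For $n=3$ you defer $f(3K_2)\ge 9$ to a search you never carry out. Your reason for discarding $3K_3$ is also false: $2K_3$ has no perfect matching, since both of its components have odd order. The $6$-vertex induced subgraphs of $3K_3$ are $3K_2$, $K_3\cup K_2\cup K_1$ and $2K_3$. By your own description of $\text{Com}^*(nK_2)$, the last two are incomparable with $3K_2$, because each contains a triangle and has no perfect matching. So $3K_3$ is $3K_2$-exact, and this is exactly the paper's example.

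Similarly, $2K_3$ settles $n=2$: its $4$-vertex induced subgraphs are $2K_2$ and $K_3\cup K_1$. Give this construction instead of quoting the table of Fact~2.1, whose entry $f(2K_2)=6$ is just a case of the theorem you are proving.

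The upper bound is only an outline. The step you yourself call the main obstacle, controlling the edges among the chosen endpoints, is never resolved. Moreover, the constraints $\nu\le n$ and $\alpha\le 2n-1$ that you propose to exploit are not contradictory at $N=4n-3$: $nK_3\cup(n-3)K_1$ has $4n-3$ vertices, $\nu=n$ and $\alpha=2n-3$.

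The missing idea is a direct use of exactness, in two cases.

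\emph{Case $\nu=n$.} Then $V(M)$ is a $2n$-set containing $nK_2$, so it induces exactly $nK_2$, and the chosen endpoints are pairwise nonadjacent. Also, no $w\in I$ is adjacent to two different $M$-edges, say to $x_1$ and $x_2$. Otherwise $\{w\}\cup V(M)\setminus\{y_1\}$ would contain the perfect matching $wx_1,x_2y_2,\dots,x_ny_n$ together with the extra edge $wx_2$. Now take either endpoint of each triangle edge. Then $G[S]$ is $tK_2\cup(|S|-2t)K_1$ with $t\le n$ and $|S|=N-n>2n$; this inequality holds both for $N=4n-3$ with $n\ge 4$ and for $N=3n+1$. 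Hence any $2n$ vertices of $S$ that include a vertex outside those $t$ edges induce a proper subgraph of $nK_2$.

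\emph{Case $\nu\le n-1$.} Then $|I|\ge 2n-1$, and every chosen endpoint $x$ has at most one neighbour in $I$. So $x$ together with $2n-1$ suitable vertices of $I$ induces $E_{2n}$ or $K_2\cup(2n-2)K_1$.

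With these additions your route works, and it is a legitimate variant of the paper's argument. The paper instead shows that every vertex of $I$ is attached to exactly one $M$-edge. It then uses $|I|>n$ to place two of them on the same edge, which produces an induced $(n-1)K_2\cup 2K_1$.

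Separately, Theorem~\ref{R-match} alone does not give $N\le 4n-3$, since containing $nK_2$ is permitted. The reduction to $N=4n-3$ (resp.\ $N=3n+1$) comes from heredity, Fact~2.1.4.
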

\begin{proof}

Note that for $n=4$, $3n=4n-4$,
\begin{enumerate}
\item{We begin with the lower bound.  For $n=2$, consider $G=2K_3$ on six vertices.  The only induced subgraphs on four vertices are $2K_2$ and $K_3 \cup K_1$, which are both in  $Leg(2K_2)$.  For $n=3$, we consider $3K_3$ on nine vertices.  The possible induced subgraphs on six vertices are $3K_2$, $K_3 \cup K_2 \cup K_1$ and $2K_3$ which are all in $Leg(3K_2)$.  Hence for $n=2,3$, $f(nK_2)\geq 3n$.

For the upper bound, we consider a graph $G$ on $3n+1$ vertices for $n=2,3$. Due to Theorem \ref{R-match} we have $R(nK_2,K_{2n}) =4n-2\leq 3n+1$ for $n \leq 3$, hence $G$ contains either induced $E_{2n} \in \text{Com}^*(nK_2)$ or $nK_2$.   So we can assume it contains $M=nK_2$ and this must be a largest matching in $G$ since $(n+1)K_2$ contains a $2n$-vertex induced subgraph $F \in \text{Com}^*(nK_2)$ by Lemma \ref{l:nK2}. 
Note that $M$ is induced.

Consider $B=V(G) \backslash V(M)$ which has $3n+1-2n=n+1$ vertices. By the maximality of $M$, $B$ must induce $E_{n+1}$. We now consider $n=2$ and $n=3$ separately. 
\smallskip

\noindent For $n=2$:
\begin{itemize}
\item{if a vertex $v \in M$ has no neighbour in $B$, then $B \cup \{v\}$ induces $E_4 \in \text{Com}^*(2K_2)$\,;}
\item{if a vertex $v \in M$ is adjacent to just one vertex in $B$, then $B \cup \{v\}$ induces $K_2 \cup 2K_1 \in \text{Com}^*(2K_2)$\,.}
\end{itemize}
Hence every vertex in $M$ must be adjacent to at least two vertices in $B$.  This implies that there are four vertices which form $P_4$ (not necessarily induced) which is in $\text{Com}^*(2K_2)$.  Hence $f(2K_2)<7$ and therefore $f(2K_2)=6$.

\noindent For $n=3$, let $M= \{ e_i=x_iy_i : i=1 ,\ldots ,3 \}$ and $B=\{u_1, \ldots,u_4\}$. Now 
\begin{itemize}
\item{if some vertex $u_j \in B$ is not adjacent to a vertex in $M$, say $y_3$ without loss of generality, then the set of vertices  $\{x_1,x_2,x_3,y_1,y_2,u_j\}$  induces $2K_2 \cup 2K_1 \in \text{Com}^*(3K_2)$\,;}
\item{if some vertex $u_j \in B$  is adjacent to two vertices from different edges in $M$, say $x_1$ and $x_2$ without loss of generality, then the set of vertices  $\{x_1,x_2,x_3,y_1,y_3,u_j\}$ induces	  $F \in \text{Com}^*(3K_2)$\,.}
\end{itemize}
Hence every vertex in $B$ must be adjacent to one or two vertices in $M$, and in the case of two vertices they must be $x_i$ and $y_i$. 

Since $|B|=4>3=|M|$, some two vertices say $u_1,u_2\in B$ have neigbours in the same matching edge say $x_1y_1$, and nonadjacent to the other two edges of $M$.
Then $\{u_1,u_2,x_2,x_3,y_2,y_3\}$ induces	  $2K_1\cup 2K_2 \in \text{Com}^*(3K_2)$\,.



Therefore $f(3K_2)<10$, implying $f(3K_2)=9$.}
\bigskip
\item{  Again, we begin with the lower bound.  Consider $G = K_{2n-1} \cup E_{2n-3}$, $|V(G)| = 4n - 4$. Any set on $2n$ vertices must include at least one isolated vertex and at least three vertices from the clique, inducing $K_p \cup E_{2n-p}$, $p \geq 3$ which is in $Leg(nK_2)$.

Hence $f(nK_2) \ge 4n - 4$.

For the upper bound, let $G$ be a graph on $4n-3$ vertices. We consider the following cases, showing in each case that  $G$ contains an induced subgraph $F$ on $2n$ vertices with $F \in \text{Com}^*(nK_2)$.

\begin{enumerate} 
\item{If $G$ contains $(n+1)K_2$, then by Lemma 1, $G$ contains a $2n$-vertex induced subgraph $F \in \text{Com}^*(nK_2)$. }
\item{If $G$ contains $M = nK_2$ as its largest matching with edges $e_i = (x_i, y_i)$, $i = 1, \dots, n$, it must be induced.  Let $B = V(G) \setminus V(M)$, $|B| = 4n-3 - 2n = 2n-3$.  Since $M$ is a maximal matching, $B$ must induce $E_{2n-3}=\{u_i : i=1, \ldots, 2n-3\}$.

\begin{itemize}
\item{If there is a vertex $u_j\in B$ having no neighbour in $M$, then the vertices $ x_i, y_i$ $(i = 1, \dots, n-1)$, $x_n$, $u_j$  induce $(n-1)K_2 \cup 2K_1 \in \text{Com}^*(nK_2)$.}

\item{If any vertex say $u_1 \in B$  is adjacent to two vertices from different edges in $M$, say $x_1$ and $x_2$ without loss of generality, then the set of vertices  $x_1$, $u_1$, $x_i$, $y_i$ ($i = 2, \dots, n$), $x_n$, $u_1$ induces a subgraph properly containing $nK_2$, hence a member of $\text{Com}^*(nK_2)$\,. }
\end{itemize}

Hence every vertex in $B$ must be adjacent to one or two vertices in $M$, and in the case of two vertices they must be $x_i$ and $y_i$. 
Similarly as in the case of $3K_2$, here $|B|=2n-3>n=|M|$, therefore two vertices from $B$ have their neighbourhood in the same matching edge, with no neighbours in any of the other $n-1$ edges of $M$.
Thus, also here an induced subgraph $(n-1)K_2 \cup 2K_1 \in \text{Com}^*(nK_2)$ occurs.}

%

\item{We are left with the case $|M|\leq n-1$.
We now observe that $4n - 3  = R( (n-1)K_2 , K_{2n} ) +1$.
Since $|V(G)|= 4n-3$, $G$ contains either $( n-1)K_2$  or $E_{2n} \in  \text{Com}^*(nK_2)$.  So, we may assume the largest matching in $G$ is $M  = \{ e_1,\ldots,e_{n-1}\}$.  Let $B = V(G) \setminus V(M)$, $|B| = 4n-3 - 2(n-1)  = 2n-1$.  Again, by maximality of $M$, $B$ induces $E_{2n-1}=\{u_i: i=1,\ldots 2n-1\}$.

If there is a vertex say $x_1 \in M$ not adjacent to any vertex in $B$, then $B \cup \{x_1\}$ induces $ E_{2n} \in \text{Com}^*(nK_2)$.  
Similarly, if there is a vertex say $x_1 \in M$  adjacent to just one vertex in  $B$, then $B \cup  \{x_1\}$  induces $K_2  \cup  (n-2)K_1 \in \text{Com}^*(nK_2)$.

Hence it remains to assume that every vertex in $M$ is adjacent  to at least two vertices in  $B$.    
Consider now two edges from $x_1$ to $B$ and two edges from $y_1$ to $B$.
mong those four edges, there are two forming $2K_2$.
Replacing the edge $x_1y_1$ with those two, together with the other $n-2$ edges of $M$ we obtain a matching of size $n$, contradicting the assumption that $M$ is a lergest matching in $G$.}

\end{enumerate}

Hence $f(nK_2)=4n-4$ for $n \geq 4$.}

\end{enumerate}
\end{proof}

\section{Conclusion}

In this paper we were mainly interested in the following problem: \\

\noindent Given a graph $H$, determine $f(H) = \max\{n : \exists G \in \text{Exact}(H), |V (G)| = n\}$.\\

We have presented many results on $f(H)$.  We consider the following open problems as a starting point for future research.  .
\begin{enumerate}
\item{For every tree $T$, determine $f(T)$ or at least prove that  \[(\alpha(T) - 1)(|V(T)| - 1) \leq f(T) \leq (\alpha(T) - 1)|V(T)|.\]}
\item{Prove that  $f(G) \leq R(G,K_k)$ holds for every graph $G \notin \{K_k. E_k\}$ on $k \geq 3$ vertices.}
\end{enumerate}

\paragraph{Acknowledgement.}

Research of the second author was supported in part by the
 ERC Advanced Grant ``ERMiD''.

\bibliographystyle{plain}
\bibliography{Exactbib}
 
\end{document}